\documentclass[12pt]{amsart}
\usepackage{a4wide}

\usepackage{enumitem}

\setlist[enumerate]{
	label=\textup{(\arabic*)}
}

\usepackage{mathrsfs}
\usepackage{mathtools}
\usepackage{amsmath}
\usepackage{amssymb}
\usepackage{amsfonts}
\usepackage{graphicx}

\usepackage{amscd}
\usepackage{color}
\usepackage{verbatim}
\ProvideTextCommandDefault{\cprime}{\tprime}

\renewcommand{\ge}{\geqslant}
\renewcommand{\geq}{\geqslant}
\renewcommand{\le}{\leqslant}
\renewcommand{\leq}{\leqslant}

\numberwithin{equation}{section}

\usepackage[hidelinks]{hyperref}
\newtheorem{thm}{Theorem}[section]

\newtheorem{prop}[thm]{Proposition}
\newtheorem{lem}[thm]{Lemma}

\theoremstyle{definition}
\newtheorem{defn}[thm]{Definition}
\theoremstyle{plain}

\theoremstyle{definition}
\newtheorem{rem}[thm]{Remark}

\theoremstyle{plain}

\newcommand{\Widim}{\operatorname{Widim}}
\newcommand{\mdim}{\operatorname{mdim}}

\DeclareMathOperator{\Leb}{Leb}

\def\mb{\mathbb}

\renewcommand{\epsilon}{\varepsilon}

\newtheorem{mainthm}{Main Theorem}

\date{}

\makeatletter
\@namedef{subjclassname@2020}{\textup{2020} Mathematics Subject Classification}
\makeatother

\begin{document}
	\title[Finite-to-One Maps with a Hurewicz Bound]{Finite-to-One Maps into Band-Limited Function Spaces and Cubical Shifts with a Hurewicz Bound}
	
\author{Yonatan Gutman}

\address{\hskip -12pt Yonatan Gutman, Institute of Mathematics, Polish Academy of Sciences, ul.~\'{S}niadeckich 8, 00-656 Warszawa, Poland}
\email{y.gutman@impan.pl}

\author{Xulei Wang}

\address{\hskip -12pt Xulei Wang, Institute of Mathematics, Polish Academy of Sciences, ul.~\'{S}niadeckich 8, 00-656 Warszawa, Poland}
\email{xl.wang.98@outlook.com}


\begin{abstract}
Let $(X,T)$ be a dynamical system with the marker property and
finite mean dimension $m$. We prove that, for every integer $D>m$,
there exists an equivariant continuous map
$f:X\to([0,1]^D)^{\mathbb Z}$ whose fibers contain at most
$\lfloor D/(D-m)\rfloor$ points; in particular, $f$ is an embedding
when $D>2m$. This is the dynamical analogue of the fiber bound
in Hurewicz's classical theorem (1933) for continuous maps into
Euclidean cubes.
For every real number $a>m$, we also construct an equivariant
continuous map from $X$ into the closed unit ball of the real
Bernstein space of bounded continuous functions on $\mathbb R$
whose Fourier transforms, understood in the sense of distributions,
are supported in $[-a/2,a/2]$, with each fiber containing at most
$\lfloor a/(a-m)\rfloor$ points.
The proof adapts the signal-analysis method of
Gutman--Qiao--Tsukamoto (2019) to the finite-to-one setting
and simplifies its analytic and geometric ingredients
for $\mathbb Z$-actions.
\end{abstract}

\maketitle



\section{Introduction}

We begin with Hurewicz's classical finite-to-one theorem \cite[p.~124]{Kuratowskibook}\footnote{Kuratowski's version is stated for maps into $[0,1]^D$.
Since $X$ is compact, the image of any $f\in C(X,\mathbb R^D)$ lies in a cube. Applying the cube-valued density statement after affine normalization, and then rescaling back, gives the corresponding density statement in $C(X,\mathbb R^D)$.
The $G_\delta$ assertion follows from the non-equivariant version of Proposition~\ref{prop:open_K_map}.}.

\begin{thm}[Hurewicz]
	\label{thm:Hurewicz}
		Let $X$ be a compact metrizable space with $\dim X=m$, and let
		$D\in\mathbb N$ satisfy $D>m$. Then the maps
		$f\in C(X,\mathbb R^D)$ satisfying
	\[
	\#f^{-1}(y)
	\le
	\left\lfloor\frac{D}{D-m}\right\rfloor
	\qquad\text{for every }y\in\mathbb R^D
	\]
	form a dense $G_\delta$ subset of $C(X,\mathbb R^D)$, endowed with
	the uniform topology.
\end{thm}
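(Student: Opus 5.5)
Set $k=\lfloor D/(D-m)\rfloor$. The plan is to show that the maps with $\#f^{-1}(y)\le k$ for every $y$ form a countable intersection of open dense sets. The key quantity is the number $(k+1)(D-m)$, which is at least $D+1$ and hence exceeds $D$. Fix a metric on $X$. For $\delta>0$, let $U_\delta$ be the set of $f\in C(X,\mathbb R^D)$ for which there is no $(k+1)$-tuple $x_0,\dots,x_k$ with pairwise distances $\ge\delta$ and $f(x_0)=\dots=f(x_k)$. Then the good set equals $\bigcap_n U_{1/n}$. Each $U_\delta$ is open: the set of $\delta$-separated $(k+1)$-tuples is compact, so if $f(x_i)\neq f(x_j)$ for some $i,j$ on every such tuple, a positive minimum of $\max_{i,j}|f(x_i)-f(x_j)|$ persists under small uniform perturbations. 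This is the non-equivariant form of Proposition~\ref{prop:open_K_map}. The $G_\delta$ statement therefore reduces to proving that each $U_\delta$ is dense.

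To prove density, fix $f$ and $\epsilon>0$. Since $\dim X=m$, choose a finite closed cover (or open cover) of $X$ with order $\le m+1$ and mesh $<\min(\delta/2,\eta)$, where $\eta$ is small enough that $f$ varies by less than $\epsilon/2$ on each set. Take a subordinate partition of unity $\{\phi_v\}$. This gives a map $\Phi:X\to|N|$ into the nerve, a simplicial complex of dimension $\le m$, and the new map has the form $g=\sum_v\phi_v\,p_v$. Here the points $p_v\in\mathbb R^D$ are chosen within $\epsilon/2$ of $f(x_v)$, with $x_v$ a point of the $v$-th set. Then $\|g-f\|<\epsilon$. The map $g$ factors as $L\circ\Phi$, where $L$ is the piecewise linear map on $|N|$ sending each vertex $v$ to $p_v$.

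The main step is to choose the $p_v$ in general position, which is possible by a genericity argument on the finite configuration. Suppose $x_0,\dots,x_k$ are pairwise $\delta$-separated. Because the mesh is $<\delta/2$, no cover element contains two of the points, so the carrier simplices $\sigma_i$ of $\Phi(x_i)$ have pairwise disjoint vertex sets. A coincidence $L(y_0)=\dots=L(y_k)$ with $y_i\in\sigma_i$ imposes $kD$ linear conditions on the product $\sigma_0\times\dots\times\sigma_k$. That product has dimension at most $(k+1)m$. For generic $\{p_v\}$, whenever the vertices involved are distinct, the affine map $\sigma_0\times\dots\times\sigma_k\to(\mathbb R^D)^k$ given by $(L(y_i)-L(y_0))_i$ has the generic rank. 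So a solution exists only if $(k+1)m\ge kD$, that is, only if $k(D-m)\le m$.

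I expect this dimension count to be the main obstacle. The condition $k\le m/(D-m)$ must be stated carefully: the failure is for $k+1$ points, so we need $k(D-m)>m$ with $k$ replaced by the number of coincidences. Equivalently, we need $(k+1)(D-m)>D$, i.e.\ $k+1>D/(D-m)$, which holds by the definition of $k$. To make the genericity rigorous, I would fix the disjoint vertex tuples. There are finitely many, and for each, the set of bad configurations $(p_v)\in(\mathbb R^D)^{V}$ lies in a proper algebraic subset. I would verify this through the incidence variety $\{(p,y): \text{coincidence}\}$, which has dimension $\le (k+1)m+|V|D-kD<|V|D$, so its projection has measure zero. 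Choosing $p$ outside the finite union gives $g\in U_\delta$, which proves density.
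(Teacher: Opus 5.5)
Your proposal is correct. The $G_\delta$ half is exactly the paper's argument: the footnote introducing Theorem~\ref{thm:Hurewicz} derives it from the non-equivariant form of Proposition~\ref{prop:open_K_map}, as you do with your sets $U_\delta$. You should add the one-line appeal to the Baire category theorem in the Banach space $C(X,\mathbb R^D)$ to conclude that $\bigcap_n U_{1/n}$ is dense.

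For density the two routes differ, because the paper gives no argument at all. It cites Kuratowski's version for maps into $[0,1]^D$ and transfers it to $C(X,\mathbb R^D)$ by affine normalization and rescaling. You instead give a self-contained proof directly in $\mathbb R^D$. You use a small-mesh open cover of order $\le m+1$, the nerve map, and general position of the vertex images. The key count is $(k+1)m<kD$, which is equivalent to $(k+1)(D-m)>D$, i.e.\ to $k+1>D/(D-m)$, and so holds by the definition of $k$. The citation keeps the note short. Your route makes the theorem independent of the literature and avoids the cube-to-$\mathbb R^D$ transfer. It is essentially the classical general-position argument behind the cited result.

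Two small points need tightening.

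First, the sentence saying the affine map has ``generic rank'' does not by itself exclude a coincidence, since an injective affine map can still hit $0$. What you need is that its image avoids $0$. Your incidence-variety count proves exactly this, so rely on that count rather than on the rank statement. (Your ``main obstacle'' paragraph is also muddled, but the inequality you finally require is the right one.)

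Second, in the count, say why the $kD$ scalar conditions are independent for each fixed $(y_0,\dots,y_k)$:
\begin{itemize}
\item the vertex sets of $\sigma_0,\dots,\sigma_k$ are pairwise disjoint;
\item each $y_i$ has some positive barycentric coordinate;
\item so for $i\ge1$ the difference $L(y_i)-L(y_0)$ can be prescribed freely by moving one vertex image $p_v$ with $v\in\sigma_i$.
\end{itemize}
With this, the incidence set is a vector bundle over the product of the affine hulls of the $\sigma_i$, of dimension at most $(k+1)m+|V|D-kD<|V|D$. Its projection to configuration space is therefore null. Taking the finite union over all tuples of simplices with pairwise disjoint vertex sets keeps it null, so admissible $p_v$ exist arbitrarily close to the $f(x_v)$.
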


When $D>2m$, we have
\[
\frac{D}{D-m}<2, \quad \text{and thus} \quad \left\lfloor\frac{D}{D-m}\right\rfloor=1.
\]

Hence
	Theorem~\ref{thm:Hurewicz} implies that a generic continuous map $f\colon X\to\mathbb R^D$ is an embedding. Thus, for $\dim X=m$ and
$D>2m$, the classical Menger--N\"obeling embedding theorem  
\cite[Theorem V.2]{Hurewicz-Wallman1941} follows as a special case of
Hurewicz's finite-to-one theorem.

	The purpose of this note is to study a dynamical analogue of this
	finite-to-one phenomenon. In topological dynamics, ordinary dimension is
	replaced by mean dimension. Given a dynamical system $(X,T)$ and
	$D\in\mathbb N$, it is natural to ask whether
	$\operatorname{mdim}(X,T)<D$ implies the existence of an equivariant
	finite-to-one map into the cubic shift $([0,1]^D)^{\mathbb Z}$, and whether
	one can obtain a Hurewicz-type bound on its fibers.

This question was studied by Levin \cite{levin2023}. He proved that if $(X,T)$ is a
\emph{minimal} $\mathbb Z$-system with $\operatorname{mdim}(X,T)=m$, then for every
integer $D>m$, a generic equivariant continuous map
\[
f: X\longrightarrow ([0,1]^D)^{\mathbb Z}
\]
is finite-to-one, and each fiber contains at most $\lfloor\frac{D}{D-m}\rfloor \frac{D}{D-m}$ points. Levin also pointed out that the natural expected estimate for the fiber
cardinality should be the Hurewicz-type bound $\lfloor D/(D-m)\rfloor$.

The main result of this note is to show that this bound is attainable. 
More precisely, we attain the expected Hurewicz bound under the \emph{marker
property}. Hence the result both sharpens Levin's fiber estimate and extends
the conclusion beyond the minimal setting\footnote{A stronger result implying that such maps are generic was obtained in \cite{GLLM26}.}.

\begin{mainthm}\label{mainthm:1}
Let $(X,T)$ be a dynamical system with the marker property. Let $D$ be a natural number such that $m=\mdim(X,T)<D$. Then there exists an equivariant continuous map	$f:X\to([0,1]^D)^\mathbb Z$ such that $f$ is finite-to-one and
\[
\#f^{-1}(y)
\le
\left\lfloor\frac{D}{D-m}\right\rfloor
\qquad\text{for every }y\in([0,1]^D)^\mathbb Z.
\]

	In particular, if $D>2m$, then $f$ is an embedding.
\end{mainthm}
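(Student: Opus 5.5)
We follow the signal-analysis route of Gutman--Qiao--Tsukamoto. First we prove the band-limited statement, and then we pass to the cubical shift by sampling. Fix $a$ with $m<a<D$ and $\lfloor a/(a-m)\rfloor=\lfloor D/(D-m)\rfloor$. This is possible because $t\mapsto t/(t-m)$ is decreasing and continuous, so choosing $a$ slightly below $D$ does not change the floor unless $D/(D-m)$ is an integer. In that boundary case we keep the integer bound $k=D/(D-m)$ and use the strict inequality at the level of Hurewicz-type dimension counting, as explained below.

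The first step constructs an equivariant continuous map $g\colon X\to \mathcal B(a)$, where $\mathcal B(a)$ is the unit ball of the real Bernstein space with band $[-a/2,a/2]$, whose fibers have at most $k=\lfloor a/(a-m)\rfloor$ points. The second step composes with a sampling map. A function $\phi\in\mathcal B(a)$ is determined by its samples on a lattice of density greater than $a$. Take $D$ interleaved shifted integer lattices $\{n+j/D\}$, of total density $D>a$, and set $F(x)_n=(g(x)(n+j/D))_{j<D}$, rescaled into $[0,1]^D$. Then $F$ commutes with $T$ because $g(Tx)=g(x)(\cdot+1)$, and $F$ is injective on $g(X)$ by oversampled uniqueness for Bernstein functions (a Beurling-type result). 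So $\#F^{-1}(y)\le k$.

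The first step is the heart of the proof. We use the marker property to get, for a large scale $L$, a continuous function giving a tiling of the orbit of each $x$ into intervals of length about $L$ (via a Voronoi-type construction). Following GQT, we then build $g$ as a uniform limit of perturbations. Given $g_0$ and $\epsilon>0$, we perturb on each tile by adding $\sum_\lambda c_\lambda(x)\,\psi(t-\lambda)$, where the $\lambda$ form a sampling set of density roughly $a$ inside the tile and $\psi$ is a fixed Schwartz function with band in $[-a/2,a/2]$ that is rapidly decaying. The coefficients $c_\lambda(x)$ come from a map of an $\epsilon$-cover nerve (of $\mdim$ almost $m$ per unit length, so dimension about $mL$) into $\mathbb R^{aL}$. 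On this polyhedron we apply the Hurewicz theorem (Theorem~\ref{thm:Hurewicz}) in its strong general-position form. For a generic linear-on-simplices map of a polyhedron of dimension $\le mL$ into $\mathbb R^{aL}$, a $(k+1)$-fold coincidence requires codimension $k(aL)-(k+1)mL<0$, which fails when $k\ge m/(a-m)$, that is, $k+1>a/(a-m)$. The boundary effects at tile ends and the slack between $\mdim$ and $m$ cost only $o(L)$ dimensions, absorbed by taking $a$ strictly between $m$ and $D$. This is the reason for insisting on strict inequality in the boundary case.

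The main obstacle is upgrading this local, tile-by-tile general position to a global fiber bound. Points $x_0,\dots,x_k$ in a common fiber may have different tilings, so their tiles are misaligned, and the tails of $\psi$ couple neighbouring tiles. We would first handle this as GQT do. We write the fiber condition as a set of open conditions, namely that $(k+1)$-tuples that are $\delta$-separated have images that are $\epsilon$-separated. Such maps form an open set, via the analogue of Proposition~\ref{prop:open_K_map}. We then show density by a Baire argument on $\delta\to0$. For misaligned tuples we compare on a common long window. There the points' tile structures differ in only $O(1)$ boundary locations per tile, and the dimension count still closes thanks to the $o(L)$ slack. Making the decay of $\psi$ and the cutoff errors fit inside this slack, while keeping $\|g\|_\infty\le1$, is the delicate quantitative part.
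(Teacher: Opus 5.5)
Your reduction is the paper's: choose $a\in(m,D)$ with $\lfloor a/(a-m)\rfloor=\lfloor D/(D-m)\rfloor$, then compose with the sampling embedding of Lemma~\ref{lem:sampling_embedding}. Two small corrections. There is no boundary case: as $a\uparrow D$, $a/(a-m)$ decreases to $D/(D-m)$ from above, so the floor is preserved even when $D/(D-m)$ is an integer. And the Hurewicz count needs $k>m/(a-m)$ strictly.

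\textbf{The gap: points of a fiber with different tilings.} The genuine gap is in the band-limited step, exactly at the obstacle you flag. Your general-position input, Hurewicz in the form of Lemma~\ref{lem:con_hure}(2), bounds the fibers of one map $(p,q)\mapsto\mathbb G(p,q)|_\Lambda$ on one subcomplex $\mathcal P(W)\times\mathcal Q(s)$ with one coordinate set $\Lambda$. To turn $g(x_0)=\cdots=g(x_k)$ into a statement about such a fiber, the $x_i$ must share the tile, the placement of the finite-dimensional data, and the set of samples where values are prescribed.

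If their tilings differ, equality of the signals gives only cross-relations $\mathbb G(\xi)(\lambda-n)=\mathbb G(\xi')(\lambda-n')$ between data on different tiles with different offsets. It gives no relation at all at samples near a tile boundary of some $x_i$, where the value is not prescribed. Your claim that misaligned tilings ``differ in only $O(1)$ boundary locations per tile'' is unsupported: the tilings come from the marker function along unrelated orbits and can be arbitrarily offset. Dimension slack does not turn cross-coincidences into a Hurewicz bound, nor does it produce one finite-dimensional map that is injective on the separated set.

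Likewise, tail errors of a fixed kernel $\psi$ cannot be ``absorbed into the slack''. The fiber argument needs exact identities $g_1(x)(n+\lambda)=\mathbb G(\Xi_n(x))(\lambda)$, and these are what the interpolation map $\Psi$ of Proposition~\ref{prop:prop_psi} provides.

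\textbf{The missing idea: make the tiling readable from $g(x)$.} This is how Gutman--Qiao--Tsukamoto and the paper resolve it.
\begin{enumerate}
\item Build the tiling-like map $\Phi(x)$, whose restriction to $\mathbb R$ has Fourier support in $(a/2,(a+\delta)/2)$.
\item Define the second Voronoi tiling and the tax functions from the zeros of $\Phi(x)$, counted with multiplicity.
\item Set $g=g_1+g_2$ with $g_2=\eta\operatorname{Re}\Phi/(2C_\chi)$.
\end{enumerate}
Disjointness of Fourier supports then forces $g_1(x_1)=g_1(x_2)$ and $\Phi(x_1)=\Phi(x_2)$ in a common fiber. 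So all points of the fiber share tiles and taxes, and one tile $W_{n_0}$ on which $\Xi_{n_0}$ is injective gives $\#\mathscr E\le K$. This costs bandwidth $\delta$. That is why Proposition~\ref{prop:main_prop} lands in $B_1(V^{\mathbb R}[-(a+\delta)/2,(a+\delta)/2])$ and the Baire argument runs in $\mathcal Z_a$ with $a_j\uparrow a$; your outline has no counterpart to this.

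\textbf{A second missing ingredient: short tiles.} Your tiles are not all of length about $L$. Marker-based Voronoi tiles vary continuously only because they are born and die with length zero, so arbitrarily short tiles occur. When $0$ lies in or near one, the difficulty is injectivity, not an $o(L)$ loss of dimension: no tile has both the data of $x$ near time $0$ and enough samples. The paper handles this with the tax functions of Proposition~\ref{prop:weight}(3),(4). A nearby long tile with $w_{-n_0}(n_0)=1$ stores $\pi(x)$ in its $\mathcal Q$-coordinates, paid for by the term $\kappa(x,n)\dim(CQ)$ in Lemma~\ref{lem:dim_cont}.
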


The method used here is based on the signal-analysis approach introduced by
Gutman--Tsukamoto \cite{Gutman-Tsukamoto2020} and further developed by Gutman--Qiao--Tsukamoto \cite{Gutman-Qiao-Tsukamoto2019}. The basic
idea is to encode orbit information into band-limited functions. In the
Gutman--Qiao--Tsukamoto framework, this method was used to prove embedding
theorems for $\mathbb Z^k$-actions with the marker property. Their target
spaces are spaces of band-limited signals, and the bandwidth plays the role of
dimension density.

\subsection{Band-limited signals.}

Throughout this paper, we denote the imaginary unit by $i$. For a rapidly decreasing function $\varphi:\mathbb R\to\mathbb C$, define
\[
\mathcal F(\varphi)(\xi)=\widehat\varphi(\xi)
:=\int_{\mathbb R}\varphi(t)e^{-2\pi it\xi}\,dt,
\qquad
\mathcal F^{-1}(\varphi)(t)
:=\int_{\mathbb R}\varphi(\xi)e^{2\pi it\xi}\,d\xi.
\]
Then $\mathcal F\mathcal F^{-1}\varphi=\mathcal F^{-1}\mathcal F\varphi=\varphi$.
We extend these operators to tempered distributions by duality; see
\cite[Chapter~7]{Schwartz1966}. In particular,
$\mathcal F(e^{2\pi ict})=\delta_c$, where $\delta_c$ is the Dirac
measure at $c\in\mathbb R$.

Following the notation of
\cite[Section~1.2]{Gutman-Tsukamoto2020}, for real numbers $c<d$, let
$V[c,d]$ be the complex Banach space of bounded
continuous functions $\varphi:\mathbb R\to\mathbb C$ satisfying
$\operatorname{supp}\widehat\varphi\subset[c,d]$, equipped with the
supremum-norm, and let
$V^{\mathbb R}[c,d]:=V[c,d]\cap C(\mathbb R,\mathbb R)$ be its real subspace.
For either of these spaces $V$, denote its closed unit ball by
$B_1(V):=\{\varphi\in V:\|\varphi\|_{L^\infty(\mathbb R)}\le1\}$.
Unless otherwise stated, every such unit ball is endowed with the topology of
uniform convergence on compact subsets of $\mathbb R$, induced by
\begin{equation}\label{eq:def_metric_V}
d_V(\varphi,\psi):=
\sum_{n=1}^{\infty}2^{-n}
\|\varphi-\psi\|_{L^\infty([-n,n])}.
\end{equation}
By \cite[Lemma~2.3]{Gutman-Tsukamoto2020}, $B_1(V[c,d])$ is compact in this
topology, and so is its closed real subspace $B_1(V^{\mathbb R}[c,d])$.
Translation defines
a continuous action $\sigma^n(\varphi)(t)=\varphi(t+n)$, and we refer to the
resulting systems as shifts on the band-limited
signals.

We shall use the following real-valued sampling consequence of
\cite[Lemma~2.4]{Gutman-Tsukamoto2020}.

\begin{lem}[Sampling embedding]\label{lem:sampling_embedding}
	Let $D\in\mathbb N$ and $0<a<D$. Then the sampling map $\operatorname{Sam}_D:
	B_1\!\left(V^{\mathbb R}\!\left[-a/2,a/2\right]\right)
	\longrightarrow([-1,1]^D)^{\mathbb Z},$
	\[
	\operatorname{Sam}_D(\varphi):=
	\left(
	\varphi(n),\varphi\left(n+\frac1D\right),\ldots,
	\varphi\left(n+\frac{D-1}{D}\right)
	\right)_{n\in\mathbb Z}
	\]
	is a $\mathbb Z$-equivariant topological embedding.
\end{lem}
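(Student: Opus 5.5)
Three properties have to be checked for $\operatorname{Sam}_D$: it is well defined with values in $([-1,1]^D)^{\mathbb Z}$, it is continuous and equivariant, and it is injective. Once these hold, compactness of $B_1(V^{\mathbb R}[-a/2,a/2])$ (from \cite[Lemma~2.3]{Gutman-Tsukamoto2020}) and the fact that the target is Hausdorff make $\operatorname{Sam}_D$ a topological embedding onto its image.

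Well-definedness is immediate: $\|\varphi\|_{L^\infty}\le1$ and $\varphi$ is real-valued, so every sample lies in $[-1,1]$. For equivariance, we have $\sigma\varphi(t)=\varphi(t+1)$. Hence the $n$-th coordinate block of $\operatorname{Sam}_D(\sigma\varphi)$ is
\[
\bigl(\varphi(n+1+k/D)\bigr)_{k=0}^{D-1},
\]
which is the $(n+1)$-th block of $\operatorname{Sam}_D(\varphi)$. So $\operatorname{Sam}_D$ intertwines $\sigma$ with the left shift. For continuity, note that the product topology on $([-1,1]^D)^{\mathbb Z}$ is determined by finitely many coordinates at a time. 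Each coordinate $\varphi\mapsto\varphi(n+k/D)$ is continuous for the topology of uniform convergence on compact sets, because $|\varphi(s)-\psi(s)|\le\|\varphi-\psi\|_{L^\infty([-N,N])}\le 2^{N}d_V(\varphi,\psi)$ for $|s|\le N$.

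Injectivity is the real content. Suppose $\operatorname{Sam}_D(\varphi)=\operatorname{Sam}_D(\psi)$. Since $V^{\mathbb R}[-a/2,a/2]$ is a linear space, $\chi=\varphi-\psi$ is a bounded real continuous function with $\operatorname{supp}\widehat\chi\subset[-a/2,a/2]$ that vanishes on the lattice $\frac1D\mathbb Z$. This lattice has density $D>a$, so it oversamples the band. Here I would invoke \cite[Lemma~2.4]{Gutman-Tsukamoto2020}, the uniqueness/sampling statement there: a bounded band-limited function with spectrum in an interval of length $a<1/\delta$ that vanishes on $\delta\mathbb Z$ must vanish identically. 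We apply it with $\delta=1/D$. If one prefers to reduce to a normalized version with unit-spaced samples, rescale by setting $\tilde\chi(t)=\chi(t/D)$. Then $\operatorname{supp}\widehat{\tilde\chi}\subset[-a/(2D),a/(2D)]$, an interval of length $a/D<1$, and $\tilde\chi$ vanishes on $\mathbb Z$. Hence $\chi=0$, that is, $\varphi=\psi$.

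The main obstacle is exactly this uniqueness step, because $\chi$ is only bounded and not in $L^2$, so the classical Shannon theorem does not apply directly. If the cited lemma is not stated in precisely this form, I would prove it in two steps. First, use the strict inequality $a<D$ to pick a Schwartz function $\rho$ with $\widehat\rho=1$ on $[-a/2,a/2]$ and $\operatorname{supp}\widehat\rho\subset(-D/2,D/2)$. Since $\widehat\chi=\widehat\chi\,\widehat\rho$, this gives $\chi=\chi*\rho$. Second, apply Poisson summation in the distributional sense to obtain the reproducing formula
\[
\chi(t)=\frac1D\sum_{j\in\mathbb Z}\chi(j/D)\,\rho(t-j/D),
\]
which converges absolutely because $\rho$ decays rapidly and $\chi$ is bounded. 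With all samples equal to zero, the formula forces $\chi\equiv0$. Combined with compactness of the domain, this completes the embedding.
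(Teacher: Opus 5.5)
Your proposal is correct and follows essentially the paper's route: the paper gives no separate argument and simply derives the lemma from \cite[Lemma~2.4]{Gutman-Tsukamoto2020} together with the compactness of $B_1(V^{\mathbb R}[-a/2,a/2])$, and you do the same, splitting the claim into equivariance, continuity in the product topology, injectivity via that sampling lemma, and the compact-to-Hausdorff argument. Your self-contained Poisson-summation proof of the uniqueness step is also valid; choosing $\widehat\rho\equiv1$ on a neighbourhood of $[-a/2,a/2]$ inside $(-D/2,D/2)$ makes the identity $\widehat\chi=\widehat\chi\,\widehat\rho$ immediate.
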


Thus we first construct equivariant maps into
$B_1(V^{\mathbb R}[-a/2,a/2])$, where $a$ is the total bandwidth, and then
pass to the cubical shift by sampling at rate $D>a$.

\begin{mainthm}\label{mainthm:2}
	Let $(X,T)$ be a dynamical system with the marker property.
	Set $m=\mdim(X,T)$. If $a$ is a real number with $a>m$, then there exists an equivariant
	continuous map
		$f:X\to B_1\!\left(V^{\mathbb R}\!\left[-a/2,a/2\right]\right)$
	such that
	\[
	\#f^{-1}(\varphi)
	\le
	\left\lfloor \frac{a}{a-m}\right\rfloor
	\qquad
	\text{for every }\varphi\in
	B_1\!\left(V^{\mathbb R}\!\left[-\frac a2,\frac a2\right]\right).
	\]
\end{mainthm}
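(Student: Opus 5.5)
We follow the Baire-category scheme of Gutman--Qiao--Tsukamoto, adapted to fibers of bounded size. Set $K:=\lfloor a/(a-m)\rfloor$. Let $\mathcal M$ be the space of equivariant continuous maps $f:X\to B_1(V^{\mathbb R}[-a/2,a/2])$. Since the target is compact and metrized by $d_V$, we give $\mathcal M$ the uniform metric; it is a complete metric space. For $\delta>0$, let $\mathcal U_\delta\subset\mathcal M$ be the set of $f$ with the following property: whenever $x_0,\dots,x_K\in X$ satisfy $f(x_0)=\dots=f(x_K)$, some pair satisfies $d(x_j,x_l)<\delta$. Each $\mathcal U_\delta$ is open; this is the equivariant counterpart of Proposition~\ref{prop:open_K_map}, and it follows from compactness of $X^{K+1}$. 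The intersection $\bigcap_n\mathcal U_{1/n}$ consists exactly of the maps with all fibers of size at most $K$. So by Baire it suffices to prove that each $\mathcal U_\delta$ is dense.

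For density, fix $f\in\mathcal M$, $\delta>0$ and $\epsilon>0$. First we shrink $f$ slightly, e.g.\ replace it by $(1-\epsilon)f$ after a small convolution in the frequency variable. This leaves room in the unit ball and pushes the spectrum into $[-a'/2,a'/2]$ for some $m<a'<a$. Next, by the marker property we choose a large $L$ and a continuous function $\phi:X\to[0,1]$ whose ``marker times'' $\{n:\phi(T^nx)>0\}$ cut each orbit into blocks of lengths between $L$ and roughly $2L$. On each block we work with an $\epsilon$-embedding $F_x$ from the finite piece of orbit into $\mathbb R^{N}$, where $N\approx a''\cdot(\text{block length})$ and $m<a''<a'$. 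This is possible because $\Widim_\delta(X,d_L)/L<m+\eta$ for large $L$. The perturbation $g=f+\sum_{\text{blocks}}\sum_k c_k(x)\,\psi(t-t_k)$ is built from these coordinates: the $\psi(\cdot-t_k)$ are translates of a fixed band-limited bump with spectrum in $[-a/2,a/2]$, placed at points $t_k$ of density $a''$ inside the block. These translates are chosen, as in \cite{Gutman-Qiao-Tsukamoto2019}, so that the coefficients can be recovered stably from $g$ restricted to the block. The interpolation part can be simplified for $\mathbb Z$: because $a''<a$, one can choose $\psi$ so that $\{\psi(\cdot-t_k)\}$ is a Riesz sequence, which gives a bounded left inverse.

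The key step, and the main obstacle, is to make the coefficient map on each block land in general position with the Hurewicz count rather than merely injectively. Suppose $x_0,\dots,x_K$ have equal images but are pairwise $\delta$-far. Then for some block, the $K+1$ coefficient vectors in $\mathbb R^N$ are equal, up to controlled errors coming from the contributions of $f$ and the neighbouring blocks. The block polyhedra have dimension $\lesssim(m+\eta)\ell$ for block length $\ell$. A generic choice of the affine data then rules out a $(K+1)$-fold coincidence in $\mathbb R^{a''\ell}$ as long as $(K+1)(a''-m-\eta)\ell>a''\ell$, i.e.\ $K+1>a''/(a''-m-\eta)$. This holds for $a'',\eta$ close enough to $a,0$, since $K+1>a/(a-m)$ strictly. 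Here we use the classical Hurewicz dimension count for polyhedra, and the difficulty lies in the dynamics. The points $x_j$ may have different marker configurations, so their blocks need not align. We would handle this as in Gutman--Qiao--Tsukamoto: first the marker times themselves are encoded into $g$ (via a small band-limited ``marker signal'' added to $f$), so that equal images force equal block structure near time $0$. Then the general-position argument is performed simultaneously over the finitely many combinatorial types of block configuration, using a Baire/transversality choice of the perturbation parameters, and the errors are absorbed by the $\epsilon$-embedding property and the stability of the interpolation.

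Finally, we check that $g$ is equivariant, continuous, has norm at most $1$, and is within $\epsilon$ of $f$. Then $g\in\mathcal U_\delta$, so each $\mathcal U_\delta$ is dense, and the Baire intersection yields the desired $f$.
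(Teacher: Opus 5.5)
\textbf{Gap 1: the choice of Baire space.} Your Baire scheme and your Hurewicz count $(K+1)(a''-m-\eta)>a''$ are fine, but the density argument fails at its first step. You cannot move an arbitrary $f\in\mathcal M$ equivariantly and by a small amount into a strictly smaller band $[-a'/2,a'/2]$.
\begin{itemize}
\item Convolution in the frequency variable cannot shrink the Fourier support.
\item A dilation $t\mapsto ct$ destroys equivariance.
\item A Fourier cutoff is not a small perturbation.
\end{itemize}
For equivariant maps, the uniform metric built from $d_V$ is comparable to $\sup_x\|f(x)-g(x)\|_{L^\infty(\mathbb R)}$, since equivariance lets you shift $t$ into $[0,1]$. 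Sup-norm approximation by a smaller band is impossible in general. Take the rotation $\theta\mapsto\theta+1$ on $\mathbb R/(2/a)\mathbb Z$ with $a/2\notin\mathbb Q$; it is minimal, so it has the marker property, and $\mdim=0$. The map $f(\theta)(t)=\cos(\pi a(t+\theta))$ lies in $\mathcal M$. Convolving with a Schwartz kernel $\kappa$ with $\widehat\kappa(a/2)=1$ and $\widehat\kappa$ supported near $a/2$ gives $\|f(\theta)-\psi\|_{L^\infty(\mathbb R)}\ge 1/(2\|\kappa\|_{L^1})$ for every $\psi$ of bandwidth $a'<a$. Here $\|\kappa\|_{L^1}$ can be taken independent of $a'$.

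This step carries the whole construction: your marker signal can be read off from $g$ only if it sits in a frequency band disjoint from the rest of $g$. The paper changes the Baire space instead of the perturbation. It works in $\mathcal Z_a=\overline{\bigcup_j C_T(X,B_1(V^{\mathbb R}[-a_j/2,a_j/2]))}$, where $a_j\uparrow a$ and $\lfloor a_j/(a_j-m)\rfloor=K$. This is a closed subset of a complete space, and density there only requires perturbing maps of bandwidth $a_j$. The Main Proposition may then output $g$ of bandwidth $a_j+\delta\le a_{j+1}$, with $\operatorname{Re}\Phi$ placed in the new bands $\pm(a_j/2,(a_j+\delta)/2)$. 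Since the statement only asserts existence, this suffices.

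\textbf{Gap 2: approximate versus exact coincidences.} You conclude that the $K+1$ coefficient vectors agree ``up to controlled errors.'' A qualitative Hurewicz fiber bound does not exclude approximate $(K+1)$-fold coincidences. Compactness gives a tolerance $\gamma>0$, but $\gamma$ depends on the generic map. That map is chosen only after $P$ and the approximation errors from $f$ and neighbouring blocks are fixed, so the errors cannot be forced below $\gamma$. The paper makes the coincidence exact. At active points it sets $u_x=-f_0(x)+\mathbb G(\Xi_n(x))$, and $\Psi$ interpolates exactly where $p_x=1$. Hence $g_1(x)(n+\lambda)=\mathbb G(\Xi_n(x))(\lambda)$ for $\lambda\in\Lambda_n$, with no contribution from $f$ or from other tiles. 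A Riesz-sequence left inverse does not give this exactness.

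Two further points are missing. First, ``equal block structure near $0$'' requires a block structure that is a function of the encoded signal; the paper uses the second Voronoi tiling built from the zeros of $\Phi$, not the marker times themselves. Second, when $0$ is near a block boundary you need the tax functions and the extra $CQ$ coordinates, and these must be included in the dimension count, as in Lemma~\ref{lem:dim_cont}.
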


Main Theorem~\ref{mainthm:1} follows from Main Theorem~\ref{mainthm:2} and Lemma~\ref{lem:sampling_embedding}.

\begin{proof}[Proof: Main Theorem~\ref{mainthm:2} implies Main Theorem~\ref{mainthm:1}]
Since $\mdim(X,T)=m<D$, we can choose a positive number $a$	satisfying
	\[
	m<a<D,\ \text{and}\ \left\lfloor \frac{a}{a-m} \right\rfloor
	=
	\left\lfloor \frac{D}{D-m} \right\rfloor =: K.
	\]
	Main Theorem~\ref{mainthm:2} gives an equivariant continuous map
	$g:X\to B_1(V^{\mathbb R}[-a/2,a/2])$ whose fibers contain at most
	$K$ points. By Lemma~\ref{lem:sampling_embedding},
	$\operatorname{Sam}_D$ is an equivariant embedding. Set
	$f:=\operatorname{Sam}_D\circ g$. Then
		$f:X\to([-1,1]^D)^\mathbb Z$ is an equivariant finite-to-one map satisfying
	$\#f^{-1}(y)\le K$ for every
	$y\in([-1,1]^D)^{\mathbb Z}$. Since the shifts on $([-1,1]^D)^{\mathbb Z}$ and
	$([0,1]^D)^{\mathbb Z}$ are clearly isomorphic, this completes the proof.
\end{proof}

\subsection{Relation to earlier work.}

The proof follows the signal-analysis method of
Gutman--Qiao--Tsukamoto \cite{Gutman-Qiao-Tsukamoto2019},
specialized to \(\mathbb Z\)-actions. In one dimension,
the analytic and geometric arguments become simpler.
This note can therefore serve as an introduction to their
method and to the marker-property strengthening of the embedding
theorem of Gutman--Tsukamoto \cite{Gutman-Tsukamoto2020}.
The main simplification concerns the construction of the
second Voronoi tiling. We replace the homological
zero-persistence argument of
\cite[Section~5.1]{Gutman-Qiao-Tsukamoto2019}
by Rouch\'e's theorem and dispense with the derivative
cutoff used in
\cite[Section~5.4]{Gutman-Qiao-Tsukamoto2019}.
Indeed, our tiling-like function is a nonzero entire
function of one variable, so all its zeros are isolated,
and counting them with multiplicity gives continuous
weighted zero sums, even when zeros merge or split.
These sums define the weights generating the second
Voronoi tiling without requiring quantitative lower
bounds on the derivatives at the zeros.
In several variables, nonzero holomorphic maps may have
nonisolated zeros; moreover, even isolated zeros can
exhibit arbitrarily rapid growth in number, as illustrated
by the Cornalba--Shiffman phenomenon discussed in
\cite[Section~1.4]{Gutman-Qiao-Tsukamoto2019}.
Our construction requires only local persistence of
zeros with multiplicity, rather than global estimates
on their number.
Finally, the Voronoi tiles are intervals, so the geometric
estimates become elementary estimates for the lengths
of intervals and their boundary neighbourhoods.

The paper is organized as follows. Section~\ref{Sec:Pre} reviews mean
dimension and the marker property and collects the finite-dimensional and interpolation tools used later.
Section~\ref{Sec:main_prop} states the main perturbation proposition and
derives Main Theorem~\ref{mainthm:2} from it by a Baire category argument.
Section~\ref{Sec:soc_wel} develops the one-dimensional tiling construction
and the associated tax functions. Finally,
Section~\ref{Sec:pf_main_prop} proves the main proposition.

\section{Preliminaries}\label{Sec:Pre}

This section collects the auxiliary results used in the proof. We first
review mean dimension and the marker property, then record two
finite-dimensional approximation results, and finally recall the
one-dimensional interpolation theorem from
\cite{Gutman-Qiao-Tsukamoto2019}.

Throughout, a dynamical system $(X,T)$ consists of a compact metrizable space $X$ and a homeomorphism $T:X\to X$.
Throughout, $\dim$ denotes Lebesgue covering dimension and $\mathbb N=\{1,2,\ldots\}$. We use \(\#\) for cardinality and \(\Leb\) for
Lebesgue measure on \(\mathbb R\).
\subsection{Review of mean dimension.}

		Here we review the definition of mean dimension
		\cite{Gromov1999, Lindenstrauss-Weiss2000}. Throughout this paper we assume that every simplicial
		complex is finite (i.e. it has only finitely many simplices). 
		
Let $(X,d)$ be a compact metric space and $Y$ a topological space. Let
		$\varepsilon$ be a positive number and $f:X\to Y$ a continuous map. $f$ is
		said to be an $\varepsilon$-embedding if $\operatorname{diam}f^{-1}(y)<\varepsilon$
		for all $y\in Y$. We define the $\varepsilon$-\textbf{width dimension}
		$\Widim_{\varepsilon}(X,d)$ as the minimal integer $n$ such that there exist
		an $n$-dimensional simplicial complex $P$ and an $\varepsilon$-embedding
		$f:X\to P$. It is well-known that Lebesgue covering dimension is given by
		$\dim X=\lim_{\varepsilon\to0}\Widim_\varepsilon(X,d)$.

		Let $(X,T)$ be a dynamical system and let $d$ be a
		compatible metric on $X$. For $N\in\mathbb N$, set
		$d_N(x,y):=\max_{0\le n<N}d(T^nx,T^ny)$. More generally, for
		a finite set $I\subset\mathbb Z$, denote
		$d_I(x,y):=\max_{n\in I}d(T^nx,T^ny)$, with
		$d_\varnothing\equiv 0$.

		We define the \textbf{mean dimension} of the dynamical system $(X,T)$ by
		\[
		\operatorname{mdim}(X,T)
		=
		\lim_{\varepsilon\to 0}
		\left(
		\lim_{N\to\infty}
		\frac{\operatorname{Widim}_{\varepsilon}(X,d_N)}{N}
		\right).
		\]
		
For fixed $\varepsilon$, the inner limit exists because
		$N\mapsto\operatorname{Widim}_{\varepsilon}(X,d_N)$ is subadditive; the
		outer limit exists by monotonicity in $\varepsilon$.
		The mean dimension is a topological invariant of the dynamical system
		$(X,T)$, namely, it is independent of the choice of the distance $d$ (see \cite{Gromov1999}, \S 1.5.1 for more details).

	We say that $(X,T)$ has the \emph{marker property} if for every
	$N\in\mathbb N$ there is an open $U\subset X$ such that
	$U\cap T^{-n}U=\varnothing$ for $0<n<N$ and
	$X=\bigcup_{n\in\mathbb Z}T^{-n}U$.
	
\subsection{Finite-dimensional approximation tools.}
We use the following continuous version of
	\cite[Lemma~2.5]{Gutman-Qiao-Tsukamoto2019}; the cited result gives a simplicial
	map after subdivision, but continuity suffices here.

	\begin{lem}[Approximation lemma, {\cite[Lemma 2.5]{Gutman-Qiao-Tsukamoto2019}}]\label{lem:approx_S}
	Let $\varepsilon,\delta>0$ and $n\in\mathbb N$.
		Let $(X,d)$ be a compact
		metric space and $P$ a simplicial complex. Let $\pi:X\to P$ be an
		$\varepsilon$-embedding and $f:X\to\mathbb{R}^n$ a continuous map
		such that $d(x,y)<\varepsilon$ implies
		$\|f(x)-f(y)\|_\infty<\delta$. Then there is a continuous
		$g:P\to\mathbb R^n$ with
		$\|f(x)-g(\pi(x))\|_\infty<\delta$ for every $x\in X$.
	\end{lem}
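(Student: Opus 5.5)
The plan is to build $g$ on a finite open cover of $X$ subordinate to $\pi$, using a partition of unity pulled back from the simplicial structure of $P$. First, replace $P$ by a sufficiently fine barycentric subdivision. This does not change the fact that $\pi$ is an $\varepsilon$-embedding, and we may arrange that every open star $O(v)$ of a vertex $v$ of $P$ has $\pi^{-1}(O(v))$ of small diameter. More precisely, we want the following: whenever $x,y\in X$ have $\pi(x),\pi(y)$ in a common closed simplex, then $d(x,y)<\varepsilon$.

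To obtain this, we use compactness of $X$ and the $\varepsilon$-embedding property. The sets $\pi^{-1}(p)$ have diameter $<\varepsilon$. Hence there is $\eta>0$ such that $\operatorname{diam}\pi^{-1}(B(p,\eta))<\varepsilon$ for all $p\in\pi(X)$. This is the standard argument: if it failed, we could take pairs $x_k,y_k$ with $d(x_k,y_k)\ge\varepsilon$ and $\pi(x_k),\pi(y_k)$ converging to a common point $p$. Passing to limits gives two points of $\pi^{-1}(p)$ at distance $\ge\varepsilon$, a contradiction. We then subdivide until every closed star has diameter $<\eta$.

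Next, for each vertex $v$ of the subdivided $P$ with $\pi^{-1}(O(v))\neq\varnothing$, choose a point $x_v\in\pi^{-1}(O(v))$ and set $g(v):=f(x_v)$. Extend $g$ affinely over each simplex, so that $g(p)=\sum_v \lambda_v(p)f(x_v)$, where $\lambda_v$ are the barycentric coordinates. Vertices not hit by $\pi$ can be assigned arbitrary values; they only affect points outside $\pi(X)$, where no estimate is required. This $g$ is continuous on $P$.

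For the estimate, fix $x\in X$ and let $\sigma$ be the carrier simplex of $\pi(x)$. For each vertex $v$ with $\lambda_v(\pi(x))>0$ we have $\pi(x)\in O(v)$, so $\pi(x)$ and $\pi(x_v)$ lie in the closed star of $v$ and are within $\eta$ of each other. Therefore $d(x,x_v)<\varepsilon$, and the hypothesis gives $\|f(x)-f(x_v)\|_\infty<\delta$. By convexity of the sup-norm ball,
\[
\|f(x)-g(\pi(x))\|_\infty\le\sum_v\lambda_v(\pi(x))\,\|f(x)-f(x_v)\|_\infty<\delta .
\]
The only delicate step is the uniformity constant $\eta$. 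It needs the strict inequality in the diameter condition together with compactness, and the strict inequality in the conclusion is preserved because the sum is a finite convex combination of strict bounds.
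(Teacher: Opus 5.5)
Your proof is correct, and it is essentially the standard argument behind \cite[Lemma~2.5]{Gutman-Qiao-Tsukamoto2019}, which the paper cites without reproving. As there, you pass to a fine subdivision, set $g(v)=f(x_v)$ with $\pi(x_v)$ in the open star of $v$, and extend affinely, which yields exactly the ``simplicial map after subdivision'' the paper mentions.

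Two cosmetic points. In the compactness step you can only guarantee $d(x_k,y_k)>\varepsilon-1/k$, but that suffices in the limit. Also, the property you actually use is the closed-star diameter bound, not the ``common closed simplex'' property you first announce.
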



In the construction in Section~\ref{Sec:pf_main_prop}, we shall
apply a consequence of Hurewicz's theorem to products of
simplicial complexes. These products carry natural \emph{polytopal}
structures.
We first recall the relevant definitions.

A \emph{polytope} in \(\mathbb R^d\) is the convex hull of a
finite set of points.
We adopt the convention
\(\operatorname{conv}(\varnothing)=\varnothing\).
Following \cite[Definition~2.1]{Ziegler1995},
a subset \(F\subseteq P\) is a \emph{face} of a polytope \(P\)
if there exist a linear functional
\(\ell:\mathbb R^d\to\mathbb R\) and a number \(c\in\mathbb R\)
such that \(\ell(x)\le c\) for every \(x\in P\) and $
F=\{x\in P:\ell(x)=c\}.$
The functional \(\ell\) is allowed to be identically zero.
Thus both \(\varnothing\) and \(P\) are faces of \(P\).

\begin{defn}[Polytopal complex,
    {\cite[Definition~5.1]{Ziegler1995}}]
A \emph{polytopal complex} in \(\mathbb R^d\) is a finite
collection \(\mathcal C\) of polytopes satisfying:
\begin{enumerate}
    \item \(\varnothing\in\mathcal C\);
    \item every face of every member of \(\mathcal C\)
    belongs to \(\mathcal C\);
    \item for all \(P,Q\in\mathcal C\), the intersection
    \(P\cap Q\) is a face of both \(P\) and \(Q\).
\end{enumerate}
\end{defn}

A \emph{polytopal subcomplex} of \(\mathcal C\) is a
subcollection \(\mathcal D\subseteq\mathcal C\) that is itself
a polytopal complex.
Throughout, we identify a polytopal complex \(\mathcal C\)
with its \emph{underlying space}
\(|\mathcal C|=\bigcup_{P\in\mathcal C}P\), equipped with
the subspace topology from \(\mathbb R^d\).
Finite simplicial complexes are polytopal complexes, and
their finite products carry natural polytopal structures.
Products of subcomplexes are subcomplexes of the corresponding
product. A finite union of subcomplexes of a fixed
complex is again a subcomplex.
See \cite{Ziegler1995} for details.

We use the following consequence of Hurewicz's theorem.

\begin{lem}
	\label{lem:con_hure}
	Let $K,D\in\mathbb N$ and let $P$ be a polytopal complex.
	Write $C(P,\mathbb R^D)$ for the space of continuous maps
	$f:P\to\mathbb R^D$ with the norm
	$\|f\|_\infty:=\sup_{x\in P}\|f(x)\|_\infty$. Denote
	$[D]=\{1,\ldots,D\}$, and, for $A\subseteq[D]$, write
	$f_A:=\operatorname{pr}_A\circ f$, where
	$\operatorname{pr}_A:\mathbb R^D\to\mathbb R^A$ is the coordinate
	projection.
	\begin{enumerate}
		\item If $KD>(K+1)\dim P$, then the maps
		$f\in C(P,\mathbb R^D)$ satisfying $\#f^{-1}(y)\le K$ for every
		$y\in\mathbb R^D$ form a dense $G_\delta$ subset of
		$C(P,\mathbb R^D)$.
		
		\item The maps $f\in C(P,\mathbb R^D)$ such that $f_A|_Q: Q\to\mathbb R^A$ has
		fibers of cardinality at most $K$ whenever $Q\subseteq P$ is a
		nonempty polytopal subcomplex and $A\subseteq[D]$ satisfies
		$\#A>\frac{K+1}{K}\dim Q$ form a dense $G_\delta$ subset of
		$C(P,\mathbb R^D)$.
	\end{enumerate}
\end{lem}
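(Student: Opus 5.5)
For part (1), I will read Hurewicz's Theorem~\ref{thm:Hurewicz} with $m:=\dim P$ and translate the hypothesis. The condition $KD>(K+1)m$ is equivalent to $K(D-m)>m$, i.e.\ $D/(D-m)<K+1$, provided $D>m$; and $D>m$ follows from it, since $KD>(K+1)m\ge Km$. Hence $\lfloor D/(D-m)\rfloor\le K$. Because $P$ is a compact metrizable space of covering dimension $\dim P$, Theorem~\ref{thm:Hurewicz} shows that the maps with fibers of cardinality at most $\lfloor D/(D-m)\rfloor\le K$ form a dense $G_\delta$ set. The set of maps with fibers of size at most $K$ contains this set. To see it is itself $G_\delta$, I will write it as
\[
\bigcap_{j\ge1}\mathcal U_j,
\]
where $\mathcal U_j$ is the set of $f$ with no $K+1$ points of mutual distance at least $1/j$ that share a common image. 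Each $\mathcal U_j$ is open, by compactness of the set of $(K+1)$-tuples with pairwise distance at least $1/j$. This is the non-equivariant form of the openness statement the footnote alludes to. One edge case: if $P$ is a single point, or $\dim P=0$, the argument above still applies. Also, $\dim P$ here means the maximal dimension of a cell, which equals the covering dimension.

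For part (2), I will first handle each pair $(Q,A)$ separately. There are finitely many subcomplexes $Q$ and subsets $A$, since $\mathcal C$ is a finite collection. Fix a pair with $\#A>\frac{K+1}{K}\dim Q$, i.e.\ $K\#A>(K+1)\dim Q$. Let $\mathcal G_{Q,A}$ be the set of $f\in C(P,\mathbb R^D)$ such that $f_A|_Q$ has all fibers of size at most $K$. The same compactness argument as in part (1) writes $\mathcal G_{Q,A}$ as a countable intersection of open sets, because the restriction-projection map
\[
C(P,\mathbb R^D)\to C(Q,\mathbb R^A),\qquad f\mapsto f_A|_Q
\]
is continuous and the good set in $C(Q,\mathbb R^A)$ is $G_\delta$.

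For density of $\mathcal G_{Q,A}$, I will take $f$ and $\epsilon>0$. Part (1), applied to $Q$ with $D$ replaced by $\#A$, gives $h\in C(Q,\mathbb R^A)$ with $\|h-f_A|_Q\|<\epsilon$ and fibers of size at most $K$. I then extend the difference $h-f_A|_Q$ to all of $P$ with the same sup-norm bound, using Tietze's extension theorem coordinatewise on the closed set $Q\subseteq P$ and clipping to $[-\epsilon,\epsilon]$. Adding this extension to the $A$-coordinates of $f$, and leaving the other coordinates unchanged, gives $g\in\mathcal G_{Q,A}$ with $\|g-f\|_\infty<\epsilon$. So each $\mathcal G_{Q,A}$ is a dense $G_\delta$.

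Finally, the set in (2) is the finite intersection $\bigcap\mathcal G_{Q,A}$. The space $C(P,\mathbb R^D)$ is a Banach space, so the Baire category theorem makes this intersection a dense $G_\delta$. No step here is hard. The only point needing care is the careful verification that the fiber-bound sets are $G_\delta$ and that the extension step keeps the sup-norm small.
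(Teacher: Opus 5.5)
Your proposal is correct and follows essentially the same route as the paper: part (1) reduces to Hurewicz's theorem, with the $G_\delta$ property coming from openness of $(1/j,K)$-maps as in the paper's footnote. Part (2) passes through the restriction-projection map $f\mapsto f_A|_Q$, Tietze, and Baire over the finitely many pairs $(Q,A)$. The only difference is that you prove density of each preimage directly by a norm-controlled Tietze extension, whereas the paper gets it from surjectivity plus the open mapping theorem. If you want the strict inequality $\|g-f\|_\infty<\epsilon$, clip to $[-\epsilon',\epsilon']$ with $\epsilon'=\|h-f_A|_Q\|_\infty<\epsilon$ rather than to $[-\epsilon,\epsilon]$.
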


\begin{proof}
	For \textup{(1)}, denote $m=\dim P$. The assumption gives $D>m$ and
	$\lfloor D/(D-m)\rfloor\le K$, so the conclusion follows from
	Theorem~\ref{thm:Hurewicz}.
	For \textup{(2)}, fix $Q$ and $A$ as in the statement. The operator
	$\operatorname{res}_{Q,A}:C(P,\mathbb R^D)\to C(Q,\mathbb R^A)$,
	$f\mapsto f_A|_Q$, is bounded and linear. It is surjective by
	the Tietze extension theorem. Hence it is open by the open mapping
	theorem. By \textup{(1)}, the maps in
	$C(Q,\mathbb R^A)$ with fibers of cardinality at most $K$ form a
	dense $G_\delta$ set; its preimage under
	$\operatorname{res}_{Q,A}$ is therefore dense $G_\delta$. There are
	only finitely many pairs $(Q,A)$, so the Baire category theorem
	applied to their intersection proves \textup{(2)}.
\end{proof}

\subsection{Interpolating Functions.}

	This section records the one-dimensional interpolation result from
	\cite[Section 4]{Gutman-Qiao-Tsukamoto2019}. We keep only the notation needed
	later.

For positive $a$ and rational $0<\rho<a$, set
	$\Gamma=(1/\rho)\mathbb Z$ and $\Gamma_1=\mathbb Z+\Gamma$.
Write $\rho=p_0/q_0$ in lowest terms. Then
\[
\Gamma=\frac{q_0}{p_0}\mathbb Z,\qquad
\Gamma_1=\mathbb Z+\Gamma=\frac1{p_0}\mathbb Z,
\]
and $\Gamma_1/\Gamma$ consists of $q_0$ cosets. The admissibility condition below
allows points belonging to the same coset modulo $\Gamma$ to occur without
an additional separation requirement, whereas points from different cosets
must be more than $r$ apart. 
	\begin{defn}\label{def:admissible}
		Let $r>0$.
		\begin{enumerate}[label=\textup{(\arabic*)}, left=0pt]
			\item A subset $\Lambda\subset\Gamma_1$ is called \emph{$r$-admissible}
			if, for any $\lambda_1,\lambda_2\in\Lambda$, either
			$\lambda_1-\lambda_2\in\Gamma$ or $|\lambda_1-\lambda_2|>r$.
			
			\item A function $p:\Gamma_1\to[0,1]$ is called \emph{$r$-admissible}
			if its support $\{\lambda\in\Gamma_1:p(\lambda)>0\}$ is $r$-admissible.
			Let $\mathrm{Adf}_r$ denote the set of all $r$-admissible functions.
		\end{enumerate}
	\end{defn}
The function $p:\Gamma_1\to[0,1]$ below is an
"activation function" indicating which sampling points are active, while
$u:\Gamma_1\to\mathbb R$ contains the values to be interpolated. The
proposition asserts that, provided the active points are sufficiently
sparse, these data can be interpolated by a real-valued function whose
Fourier transform is supported in $[-a/2,a/2]$, with uniform norm control,
translation covariance, and local dependence on the data.
	\begin{prop}[One-dimensional interpolation, {\cite[Proposition 4.7]{Gutman-Qiao-Tsukamoto2019}}]
		\label{prop:prop_psi}
		There are constants $r_0,C_\Psi>0$ and a map $\Psi:\mathrm{Adf}_{r_0}\times
		\ell^\infty(\Gamma_1,\mathbb R)\to V^{\mathbb R}[-a/2,a/2]$
such that, for every
$p\in\mathrm{Adf}_{r_0}$ and
$u\in\ell^\infty(\Gamma_1,\mathbb R)$, the following hold:
		\begin{enumerate}[label=\textup{(\arabic*)}]
			\item If $\lambda\in\Gamma_1$ satisfies $p(\lambda)=1$ then $\Psi(p,u)(\lambda)=u(\lambda)$.
			
			\item $\|\Psi(p,u)\|_{L^\infty(\mathbb{R})}\leq C_\Psi\|u\|_\infty$.
			
			\item For $n\in\mathbb Z$, define $q$ and $v$ by
			$q(\lambda+n)=p(\lambda)$ and $v(\lambda+n)=u(\lambda)$ for
			$\lambda\in\Gamma_1$.
			Then $q\in\mathrm{Adf}_{r_0}$, and
			$\Psi(q,v)(t+n)=\Psi(p,u)(t)$ for each 
			$t\in\mathbb R.$

			\item For every $r,\varepsilon>0$, there are
			$r',\varepsilon'>0$ such that, if $p,q\in\mathrm{Adf}_{r_0}$ and $u,v\in\ell^\infty(\Gamma_1,\mathbb R)$ satisfy
			$\|u\|_\infty,\|v\|_\infty\le1$ and
			$|p(\lambda)-q(\lambda)|,|u(\lambda)-v(\lambda)|<\varepsilon'$ for
			$\lambda\in\Gamma_1\cap[-r',r']$, then
			$|\Psi(p,u)(t)-\Psi(q,v)(t)|<\varepsilon$ for $t\in[-r,r]$.
		\end{enumerate}
	\end{prop}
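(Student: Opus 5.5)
The plan is to build $\Psi$ as a superposition of translates of one real, rapidly decaying band-limited kernel. That kernel interpolates exactly along each coset of $\Gamma$. The residual interaction between different cosets is then removed by a Neumann-series correction, which converges because different cosets are $r_0$-separated.

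\emph{Step 1: the kernel.} Since $\rho<a$, choose a real, even function $\chi$ with $\operatorname{supp}\widehat\chi\subset[-(a-\rho)/2,(a-\rho)/2]$, $\chi(0)=1$, and $|\chi(t)|\le C_N(1+|t|)^{-N}$ for all $N$. Such $\chi$ exists: take $\chi=\mathcal F^{-1}(\eta*\tilde\eta)$ for a smooth even bump $\eta$, suitably normalized. Put
\[
g(t)=\frac{\sin(\pi\rho t)}{\pi\rho t}\,\chi(t).
\]
The $\operatorname{sinc}$ factor has spectrum in $[-\rho/2,\rho/2]$, so $\widehat g$ is supported in $[-a/2,a/2]$. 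Moreover $g$ is real, rapidly decaying, $g(0)=1$, and $g(\gamma)=0$ for $\gamma\in\Gamma\setminus\{0\}=(1/\rho)\mathbb Z\setminus\{0\}$.

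\emph{Step 2: the correction operator.} For $p\in\mathrm{Adf}_{r}$, define a linear operator on $\ell^\infty(\Gamma_1,\mathbb R)$ by
\[
(E_pv)(\lambda)=\sum_{\lambda'\in\Gamma_1,\ \lambda'-\lambda\notin\Gamma}p(\lambda')v(\lambda')g(\lambda-\lambda').
\]
Only the support of $p$ contributes, and on that support any two points in different cosets are more than $r$ apart. Also $\Gamma_1=\frac1{p_0}\mathbb Z$ is uniformly discrete. Hence
\[
\|E_p\|\le\sum_{k\in\mathbb Z,\ |k|>rp_0}\sup_{|s-k/p_0|\le 1/p_0}|g(s)|,
\]
which tends to $0$ as $r\to\infty$ by the decay of $g$. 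Fix $r_0$ with $\|E_p\|\le1/2$ for every $p\in\mathrm{Adf}_{r_0}$. Set $v=(I+E_p)^{-1}u=\sum_{j\ge0}(-E_p)^ju$, so that $\|v\|_\infty\le2\|u\|_\infty$, and define
\[
\Psi(p,u)(t)=\sum_{\lambda'\in\Gamma_1}p(\lambda')v(\lambda')g(t-\lambda').
\]
The series converges locally uniformly with $\|\Psi(p,u)\|_\infty\le 2\|u\|_\infty\sup_t\sum_{\lambda'\in\Gamma_1}|g(t-\lambda')|=:C_\Psi\|u\|_\infty$, which gives (2). A bounded, locally uniform limit of elements of $V^{\mathbb R}[-a/2,a/2]$ stays in that space, since convergence holds in the distributional sense and supports of Fourier transforms are preserved. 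So $\Psi(p,u)\in V^{\mathbb R}[-a/2,a/2]$.

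\emph{Step 3: properties (1) and (3).} Let $p(\lambda)=1$. Split the sum for $\Psi(p,u)(\lambda)$ into same-coset and other-coset parts. In the same-coset part only $\lambda'=\lambda$ survives, because $g$ vanishes on $\Gamma\setminus\{0\}$; it contributes $v(\lambda)$. The other-coset part is exactly $(E_pv)(\lambda)$. Therefore $\Psi(p,u)(\lambda)=v(\lambda)+E_pv(\lambda)=u(\lambda)$, which is (1). For (3), translation by $n\in\mathbb Z$ preserves $\Gamma_1$ and all differences $\lambda_1-\lambda_2$. Hence $q\in\mathrm{Adf}_{r_0}$, $E_q$ is the conjugate of $E_p$ by the shift, the shifted $v$ solves the shifted equation, and the covariance identity follows term by term.

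\emph{Step 4: locality (4), the main technical point.} Given $r,\varepsilon$, use the geometric decay of the Neumann series to truncate at a depth $J$ with error below $\varepsilon/4$. This works uniformly, since $\|E_p\|,\|E_q\|\le1/2$ and $\|u\|_\infty,\|v\|_\infty\le1$. Next use the decay of $g$ to truncate every kernel sum to $|\lambda-\lambda'|\le R$ with error below $\varepsilon/4$, uniformly in the data. After these truncations, $\Psi(p,u)(t)$ for $t\in[-r,r]$ is within $\varepsilon/2$ of a polynomial expression in finitely many values $p(\lambda),u(\lambda)$ with $\lambda\in[-r-(J+1)R,\,r+(J+1)R]$. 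The coefficients of this expression are bounded independently of the data. Take $r'=r+(J+1)R$ and choose $\varepsilon'$ by uniform continuity of that polynomial on $[0,1]\times[-1,1]$ per coordinate.

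The delicate part is making these truncation bounds uniform over all admissible $p$, including fractional values. This is exactly why $\|E_p\|$ is controlled through the separation of the support of $p$ and the uniform discreteness of $\Gamma_1$, rather than through the values of $p$.
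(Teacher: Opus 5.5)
The paper gives no proof of this statement; it imports it from \cite[Proposition~4.7]{Gutman-Qiao-Tsukamoto2019}. Your construction follows the same scheme as that source: a real kernel with spectrum in $[-a/2,a/2]$ vanishing on $\Gamma\setminus\{0\}$, a Neumann series removing the interference between different cosets, and locality obtained by truncating the resulting expansion.

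One intermediate claim is false as written, though it is easily repaired. The claim is the bound $\|E_p\|\le\sum_{|k|>rp_0}\sup_{|s-k/p_0|\le 1/p_0}|g(s)|$ for $E_p$ acting on all of $\ell^\infty(\Gamma_1,\mathbb R)$. Admissibility constrains only pairs of points \emph{inside} $\operatorname{supp}p$, but in $(E_pv)(\lambda)$ the point $\lambda$ ranges over all of $\Gamma_1$. For $\lambda\notin\operatorname{supp}p$, a point of $\operatorname{supp}p$ from another coset can lie at distance $1/p_0$. For example, $p=1_{\{0\}}$ is $r$-admissible for every $r$, yet testing on $\delta_0$ gives $\|E_p\|\ge\sup_{\lambda\in\Gamma_1\setminus\Gamma}|g(\lambda)|$. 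This quantity is independent of $r_0$ and typically nonzero once $\Gamma_1\ne\Gamma$. For the same reason, $\|v\|_\infty\le 2\|u\|_\infty$ is unjustified off $\operatorname{supp}p$.

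The repair is short. Let $P$ be multiplication by $1_{\operatorname{supp}p}$.
\begin{enumerate}
\item Since $E_pv$ depends only on $v|_{\operatorname{supp}p}$, we have $E_p=E_pP$. Your estimate is valid for $PE_pP$, so $\|PE_pP\|\le 1/2$.
\item Hence $E_p^j=E_p(PE_pP)^{j-1}$ and $\|E_p^j\|\le C\,2^{1-j}$, where $C=\sup_t\sum_{\lambda'\in\Gamma_1}|g(t-\lambda')|$. The Neumann series still converges and $I+E_p$ is invertible, which is all that (1) and (3) use.
\item From $PE_p^j=(PE_pP)^j$ you get $\|Pv\|_\infty\le 2\|u\|_\infty$. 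Since $\Psi(p,u)$ sees $v$ only on $\operatorname{supp}p$, this suffices for (2) with your $C_\Psi$.
\item In Step 4, justify the uniform geometric decay the same way. In the path expansion of the $j$-th term, every point $\lambda_0,\dots,\lambda_j$ carries a factor $p(\lambda_i)$ (respectively $q(\lambda_i)$). So each jump joins two points of the support lying in different cosets, and summing over it costs at most $1/2$.
\item Budget the truncation errors as, say, $\varepsilon/6$ each. Then the two truncation errors plus the polynomial difference stay below $\varepsilon$; with your $\varepsilon/4$'s they already total $\varepsilon$ before the polynomial term is counted.
\end{enumerate}
With these adjustments your argument proves the proposition.
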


	\section{Main Proposition}\label{Sec:main_prop}

	In this section we formulate the perturbation proposition which is the main
	technical step of the paper. We then show that it implies Main
	Theorem~\ref{mainthm:2}. The proof is modeled on the usual Baire category
	argument, with the notion of an $\varepsilon$-embedding
	replaced by the following finite-to-one analogue.

	\begin{defn}
For $f:X\to Y$ and $K\ge1$, we call $f$ an $(\varepsilon,K)$-map if every
		$\varepsilon$-separated subset of a fiber of $f$ has at most $K$ points.
		Equivalently, if $E\subset f^{-1}(y)$ and $d(x,x')\ge\varepsilon$ for
		distinct $x,x'\in E$, then $\#E\le K$.
	\end{defn}

	When $K=1$, this is exactly the usual notion of an $\varepsilon$-embedding.
	We will also use the elementary fact that being an $(\varepsilon,K)$-map is an
	open condition in the uniform topology. The argument does not use equivariance,
	but we state it in the equivariant setting needed below.

For dynamical systems $(X,d_X,T)$ and $(Y,d_Y,S)$, let $C_T(X,Y):=\{f\in C(X,Y):f\circ T=S\circ f\}$, endowed with the uniform metric $d_\infty(f,g):=\sup_{x\in X}d_Y(f(x),g(x))$.

	\begin{prop}\label{prop:open_K_map}
		For every $\varepsilon>0$ and $K\in\mathbb N$, the set of maps
		$f\in C_T(X,Y)$ which are $(\varepsilon,K)$-maps is open in $C_T(X,Y)$.
	\end{prop}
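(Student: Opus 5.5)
The plan is to show that the complement is closed: if $f_j\to f$ uniformly and no $f_j$ is an $(\varepsilon,K)$-map, then $f$ is not one either. Equivalently, we produce, for each $(\varepsilon,K)$-map $f$, a $\delta>0$ such that every $g\in C_T(X,Y)$ with $d_\infty(f,g)<\delta$ is again an $(\varepsilon,K)$-map. Equivariance plays no role; the same argument works in $C(X,Y)$, and $C_T(X,Y)$ simply carries the subspace topology.

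We argue by contradiction using compactness of $X$. Suppose $g_j\in C_T(X,Y)$ with $d_\infty(g_j,f)\to0$ and each $g_j$ fails to be an $(\varepsilon,K)$-map. Then there exist points $y_j\in Y$ and $K+1$ points $x_j^0,\dots,x_j^K\in g_j^{-1}(y_j)$ with $d(x_j^k,x_j^l)\ge\varepsilon$ for $k\ne l$. Passing to a subsequence $K+1$ times (using compactness of $X$), we may assume $x_j^k\to x^k$ for each $k$.

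Closedness of the condition $d\ge\varepsilon$ gives $d(x^k,x^l)\ge\varepsilon$ for $k\ne l$, so the limit points are distinct and $\varepsilon$-separated. We then check that they lie in one fiber of $f$. Indeed,
\[
d_Y(f(x^k),f(x^l))\le d_Y(f(x^k),f(x_j^k))+d_Y(f(x_j^k),g_j(x_j^k))+d_Y(g_j(x_j^l),f(x_j^l))+d_Y(f(x_j^l),f(x^l)),
\]
using $g_j(x_j^k)=g_j(x_j^l)=y_j$. The first and last terms tend to $0$ by continuity of $f$, and the middle terms are at most $d_\infty(g_j,f)\to0$. Hence $f(x^k)=f(x^l)$, and $\{x^0,\dots,x^K\}$ is an $\varepsilon$-separated subset of a single fiber of $f$ with $K+1$ points. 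This contradicts $f$ being an $(\varepsilon,K)$-map.

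There is no serious obstacle. The only point needing care is that the separation condition is the non-strict inequality $d\ge\varepsilon$. This is exactly what makes it survive the limit, and it is why the definition is stated with $\ge\varepsilon$ rather than $>\varepsilon$. Metrizability of $X$ is used only to extract convergent subsequences.
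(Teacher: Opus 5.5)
Your argument is correct and is essentially the paper's: both rest on compactness of the set $E_\varepsilon$ of $\varepsilon$-separated $(K+1)$-tuples in $X^{K+1}$, which is closed because the separation condition is non-strict, together with continuity of $f$. The paper argues directly rather than by contradiction. It takes the positive minimum $\eta$ of $\Phi_f(x_1,\ldots,x_{K+1})=\max_{p<q}d_Y(f(x_p),f(x_q))$ on $E_\varepsilon$ and shows that every $g$ with $d_\infty(f,g)<\eta/3$ is an $(\varepsilon,K)$-map, which yields an explicit radius that your sequential argument leaves implicit.
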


\begin{proof}
Fix an $(\varepsilon,K)$-map $f\in C_T(X,Y)$. Write $E_\varepsilon$ as the set of all $(K+1)$-tuples $(x_1,\ldots,x_{K+1})\in X^{K+1}$ with $d_X(x_p,x_q)
\ge\varepsilon$ for all $p\ne q$.
This is a compact set. If $E_\varepsilon=\varnothing$, every $\varepsilon$-separated
subset of $X$ has at most $K$ points, so every map $X\to Y$ is an
$(\varepsilon,K)$-map and there is nothing to prove. Suppose therefore
that $E_\varepsilon\ne\varnothing$, and define
\[
	\Phi_f(x_1,\ldots,x_{K+1})
	:=\max_{1\le p<q\le K+1}d_Y\bigl(f(x_p),f(x_q)\bigr).
\]
Since $f$ is an $(\varepsilon,K)$-map, the images of
$K+1$ pairwise $\varepsilon$-separated points cannot all coincide.
Thus at least one pair has positive $d_Y$-distance, and consequently
$\Phi_f>0$ on $E_\varepsilon$. Thus the function $\Phi_f$ is continuous and strictly
positive on $E_\varepsilon$. Since $E_\varepsilon$ is compact, there is $\eta>0$ such that $\Phi_f\geq \eta$ on $E_\varepsilon$.
			
Let $g\in C_T(X,Y)$ satisfy $d_\infty(f,g)<\eta/3$. Fix
$(x_1,\ldots,x_{K+1})\in E_\varepsilon$, and choose $p<q$ such that
$
d_Y(f(x_p),f(x_q))
=\Phi_f(x_1,\ldots,x_{K+1})
$. 
By the triangle inequality,
$$
\begin{aligned}
d_Y(g(x_p),g(x_q))
&\ge d_Y(f(x_p),f(x_q))
   -d_Y(f(x_p),g(x_p))
   -d_Y(f(x_q),g(x_q))\\
&\ge \Phi_f(x_1,\ldots,x_{K+1})
   -2d_\infty(f,g)>\eta-\frac{2\eta}{3}
=\frac{\eta}{3}>0.
\end{aligned}
$$
Thus the images under $g$ of any $K+1$ pairwise
$\varepsilon$-separated points cannot all coincide. Equivalently, no
fiber of $g$ contains $K+1$ pairwise $\varepsilon$-separated points.
Therefore $g$ is an $(\varepsilon,K)$-map, proving that the set of
$(\varepsilon,K)$-maps is open.
	\end{proof}

	The following proposition is the main technical result of the paper. It is the
	finite-to-one analogue of 
	\cite[Proposition 3.1]{Gutman-Qiao-Tsukamoto2019}. The proof occupies the rest of the paper.

\begin{prop}[Main Proposition]\label{prop:main_prop}
		Let $(X,T)$ be a dynamical system with the marker property,
		let $d$ be a compatible metric on $X$, and set $m=\mdim(X,T)$. If
		$a,\delta>0$, $m<a$, and $f:X\to B_1(V^{\mathbb R}[-a/2,a/2])$ is equivariant and
		continuous, then there is an equivariant continuous map
		$g:X\to B_1(V^{\mathbb R}[-(a+\delta)/2,(a+\delta)/2])$ such that
		\begin{itemize}
			\item $\|g(x)-f(x)\|_{L^\infty(\mathbb{R})} < \delta$ for all $x \in X$.
			\item $g$ is a $(\delta,K)$-map with respect to $d$, where
			$K=\lfloor a/(a-m)\rfloor$.
		\end{itemize}
	\end{prop}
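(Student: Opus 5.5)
We follow the scheme of \cite[Proposition~3.1]{Gutman-Qiao-Tsukamoto2019}: keep $f$ on the "old" band $[-a/2,a/2]$, and add a small perturbation built from the interpolation map $\Psi$ of Proposition~\ref{prop:prop_psi}, with bandwidth essentially $a+\delta$. Concretely, we seek $g$ of the form
\[
g(x)=(1-\eta)f(x)+\Psi\bigl(p_x,u_x\bigr),
\]
where $\eta>0$ is small, $p_x$ is an activation function, and $u_x$ is a data function on $\Gamma_1$. The rational number $\rho$ is chosen with $m<\rho<a$ close to $a$, so that $\lfloor \rho/(\rho-m)\rfloor\le K$. This is possible since $t\mapsto\lfloor t/(t-m)\rfloor$ is nonincreasing and right-continuous from above in the relevant sense; if needed we replace $a$ by $a+\delta/2$ and the bandwidth $a+\delta$ absorbs this.

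Equivariance is forced by using the marker property. Choose $N$ large and a marker set $U$. Via a continuous "tax"/weight function we build, for each $x$, a Voronoi-type partition of $\mathbb R$ into intervals of length $\approx N$ that depend continuously on $x$ and covariantly under $T$. This is the tiling construction of Section~\ref{Sec:soc_wel}, using the Rouch\'e zero-counting device to get the second, smoothed tiling. On each interval $I$ the activation $p_x$ equals $1$ on the points of $\Gamma\cap I$ shifted into the interior, away from a boundary neighbourhood, and tapers to $0$ near $\partial I$. The $r_0$-admissibility is then automatic, since in the interior we use only one coset of $\Gamma$.

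The data $u_x$ on the active points of $I$ come from a finite-dimensional approximation. Since $\mdim<\rho$, for $N$ large there is a $\delta$-embedding $\pi:(X,d_N)\to P$ with $\dim P<\rho N$ (up to $o(N)$). The map $x\mapsto$ (values of $f(x)$ at the roughly $\rho N$ active points) is approximated through $P$ by Lemma~\ref{lem:approx_S}, and then perturbed generically. The interval $I$ and its position relative to the tiling depend on $x$, so we actually work on products of simplicial complexes indexed by configurations of neighbouring tiles. For this reason Lemma~\ref{lem:con_hure}(2) is used on polytopal subcomplexes $Q$ with coordinate subsets $A$. The key count is that the number of active sample points is at least $\#A\approx \rho N$, while $\dim Q\lesssim mN$. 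Since $\rho/m>(K+1)/K$, we get $\#A>\frac{K+1}{K}\dim Q$, so the generic perturbation is $K$-to-one on every relevant piece. Injectivity of reading off samples, via property (1) of $\Psi$, then gives: if $g(x_1)=\dots=g(x_{K+1})$, the sample data agree on a common tile, so two of the points have the same $\pi$-image, and hence $d(x_i,x_j)<\delta$. Norm control uses property (2): $\|\Psi\|\le C_\Psi\|u\|$ is small, so that $\|g\|\le1$ and $\|g-f\|<\delta$. Continuity follows from property (4) together with continuity of the tiling.

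The main obstacle is the step matching fibers. When $g(x)=g(x')$, the tilings for $x$ and $x'$ need not coincide, and the argument must recover enough common active samples to invoke the generic $K$-to-one property. We would first try to encode the tiling data itself into part of the signal (as in \cite{Gutman-Qiao-Tsukamoto2019}, reserving a small bandwidth $\delta$ for the "tax" signal). Equal images then force equal tiles near the origin, which reduces the problem to a single tile and a single complex $Q$.
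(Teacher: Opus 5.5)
Your outline has the right architecture: interpolation through $\Psi$, a generic finite-to-one perturbation from Lemma~\ref{lem:con_hure}\textup{(2)}, the count $\rho/m>(K+1)/K$, and a reserved band carrying the tiling. However, there is a genuine gap when the origin is not deep inside a long tile, and the idea that repairs it is missing. The tilings of Section~\ref{Sec:soc_wel} are not made of intervals of length $\approx N$. As a weight $\nu(x,n)$ tends to $0$, the tile $W(x,n)$ shrinks to a point and disappears, so arbitrarily short tiles occur. Even in a long tile, $p_x$ vanishes within $r_0$ of $\partial(x)$.

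If $0$ lies in a short tile or near $\partial(x)$, agreement of the active samples on ``the tile containing the origin'' says nothing about $x$ at time $0$. For such a tile the inequality $\#A>\frac{K+1}{K}\dim Q$ is also false. Any complex recording the orbit over the tile contains a full factor $CP$, whose dimension does not shrink with the tile. Meanwhile the number of samples at distance $>r_0$ from the boundary is about $\rho(\Leb(W)-2r_0)$, possibly zero. So the reduction ``to a single tile and a single complex'' in your last paragraph cannot be carried out.

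The paper handles this with the tax functions of Proposition~\ref{prop:weight}. These are a separate layer on top of the tiling, not the device that builds it. The fiber argument uses only tiles with $\operatorname{Int}_{L_0}W\neq\varnothing$, where $L_0\gg r_1+N$. Every position within $L_0$ of $\partial(x)$ is paid for by a nearby long tile $W(x,n_0)$ with $|n_0|\le R_0$ and $w_{-n_0}(x,n_0)=1$. That tile's data point contains $\Pi_{-n+W(x,n)}(T^nx)\in\mathcal P(-n+W(x,n))$ and also the vector $q_{x,n}=(w_m(x,n)\cdot\pi(T^{n+m}x))_{m\in J}\in\mathcal Q(\kappa(x,n))$, built from a second $\varepsilon$-embedding $\pi:(X,d)\to Q$. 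The extra term $\kappa(x,n)\dim(CQ)$ must fit into the dimension budget. This is why Lemma~\ref{lem:dim_cont} uses $\kappa(x,n)<1+\Leb(\operatorname{Int}_{L_0}W(x,n))/A$ with $A>2\dim(CQ)/c_1$. The fiber argument then splits into two cases:
\begin{itemize}
\item $\operatorname{dist}_{\mathbb R}(0,\partial(x_*))>L_0$: use the $\mathcal P$-component;
\item $\operatorname{dist}_{\mathbb R}(0,\partial(x_*))\le L_0$: use the $\mathcal Q$-coordinate indexed by $-n_0$, which is $\pi(x)$.
\end{itemize}
Your count ``$\#A\approx\rho N$ versus $\dim Q\lesssim mN$'' has no room for this term and no second case.

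Two further ingredients are only gestured at. First, continuity. As tile boundaries move, windows $[j,j+N)$ enter and leave a tile, and ``products of complexes indexed by configurations of neighbouring tiles'' does not make $u_x$ continuous. The paper passes to the cones $CP$ and $CQ$. It weights $\Pi(T^jx)$ by $\Leb(W\cap[j,j+N))/N$ and $\pi(T^{n+m}x)$ by $w_m(x,n)$, so coordinates degenerate continuously to the vertex $*$; the cutoffs $\beta_1,\beta_2$ do the same for $p_x,u_x$. Relatedly, $\Pi$ must be an $\varepsilon$-embedding with $\varepsilon$ fixed by the uniform continuity of $f_0$, not merely a $\delta$-embedding, for Lemma~\ref{lem:approx_S} to give error below $\delta'/2$.

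Second, your formula $g=(1-\eta)f+\Psi(p_x,u_x)$ has no reserved-band term. In the paper $\Psi(p_x,u_x)\in V^{\mathbb R}[-a/2,a/2]$, and $g=f_0+\Psi(p_x,u_x)+\eta\operatorname{Re}\Phi(x)/(2C_\chi)$ with $\operatorname{supp}\widehat{\Phi(x)}\subset(a/2,(a+\delta)/2)$. Disjointness of Fourier supports splits $g(x_1)=g(x_2)$ into $g_1(x_1)=g_1(x_2)$ and $\operatorname{Re}\Phi(x_1)=\operatorname{Re}\Phi(x_2)$. The one-sided spectrum of $\Phi$ then gives $\Phi(x_1)=\Phi(x_2)$. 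This yields identical tilings and taxes on all of $\mathbb R$, not just near the origin. That is needed because the comparison of $\mathbb G(\Xi_{n_0}(\cdot))|_{\Lambda_{n_0}}$ may take place on a tile $W_{n_0}$ that does not contain $0$.
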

	
	\begin{proof}[Proof of Main Theorem~\ref{mainthm:2}, assuming
		Proposition~\ref{prop:main_prop}]
			Fix a compatible metric $d$ on $X$. For brevity, throughout this proof write
			$B_1(c):=B_1(V^{\mathbb R}[-c/2,c/2])$ for $c>0$.
			Denote $K=\lfloor a/(a-m)\rfloor$ and choose an increasing sequence
			$a_j\uparrow a$ such that $m<a_1$ and
			$\lfloor a_j/(a_j-m)\rfloor=K$ for every $j$. Let $d_a$ be the metric
			\eqref{eq:def_metric_V} on $B_1(a)$ and equip $C_T(X,B_1(a))$ with
			the uniform metric
			$\rho_a(f,g):=\sup_{x\in X}d_a(f(x),g(x))$.
			
			Regard each $C_T(X,B_1(a_j))$ as a subspace of $C_T(X,B_1(a))$, and define
			\[
			\mathcal Z_a:=
			\overline{\bigcup_{j\ge1}C_T(X,B_1(a_j))},
			\]
			where the closure is taken in $C_T(X,B_1(a))$ with respect to
			$\rho_a$. Since $B_1(a)$ is compact, it is complete under $d_a$, and thus $C_T(X,B_1(a))$ is complete.
			Hence $\mathcal Z_a$ is complete and therefore a Baire space.

			For $n\ge1$, let $\mathcal U_n\subseteq\mathcal Z_a$ be the set of
			$(1/n,K)$-maps. It is open by
			Proposition~\ref{prop:open_K_map}. To prove density, fix
			$h\in\mathcal Z_a$ and $r>0$. Choose $j$ and
			$f_0\in C_T(X,B_1(a_j))$ with
			$\rho_a(h,f_0)<r/2$, and then choose
			$0<\delta<\min\{1/n,r/2,a_{j+1}-a_j\}$. Applying
			Proposition~\ref{prop:main_prop} to $f_0$ at bandwidth $a_j$
			gives a $(\delta,K)$-map
			$g\in C_T(X, B_1(a_j+\delta))
			\subseteq C_T(X, B_1(a_{j+1}))$ such that
			$\sup_x\|g(x)-f_0(x)\|_{L^\infty(\mathbb R)}<\delta$. Since
            $$
\rho_a(g,f_0)=\sup_{x\in X}\sum_{k=1}^{\infty}2^{-k}\|g(x)-f_0(x)\|_{L^\infty([-k,k])}\le \sup_{x\in X}\|g(x)-f_0(x)\|_{L^\infty(\mathbb R)}<\delta,
$$	
we have
			$\rho_a(g,h)<\delta+r/2<r$. Moreover, $\delta<1/n$ implies that
			$g$ is a $(1/n,K)$-map. Thus $\mathcal U_n$ is dense in
			$\mathcal Z_a$.
			
			By the Baire category theorem, choose
			$f\in\bigcap_{n\ge1}\mathcal U_n$. Hence every fiber has cardinality at
			most $K$. Indeed, if a fiber contains
			distinct points $x_0,\ldots,x_K$, then
			$\alpha:=\min_{p<q}d(x_p,x_q)>0$. Choosing $n$ with $1/n\le\alpha$
			would contradict $f\in\mathcal U_n$. This ends the proof.
	\end{proof}

\section{Voronoi Tiling and tax functions}\label{Sec:soc_wel}
This section specializes the band-limited tiling construction of
\cite[Section~5]{Gutman-Qiao-Tsukamoto2019} to one dimension. The
stability of its zeros can then be obtained directly from Rouch\'e's theorem.
For the remainder of the paper, fix $(X,T),d,a,\delta,f$ satisfying the
hypotheses of Proposition~\ref{prop:main_prop}, denote $m=\mdim(X,T)$, and use $d_I$ for the metrics induced by $d$.

	\subsection{Band-limited functions associated with tilings.}

    For \(r>0\) and \(u\in\mathbb C\), let \(B_r(u)\) and \(\overline{B}_r(u)\) denote the open and closed disks of radius \(r\) centered at \(u\), respectively. We write \(B_r=B_r(0)\) and \(\overline{B}_r=\overline{B}_r(0)\). If
		$W\subset\mathbb R$, let $\partial_rW$ be the set of $t\in\mathbb R$ for
		which $[t-r,t+r]$ meets both $W$ and $\mathbb R\setminus W$, and set
		$\operatorname{Int}_rW:=W\setminus\partial_rW$. This should not be confused with the ordinary topological interior of $W$, $\operatorname{Int}W$.\footnote{In general $\operatorname{Int}W\neq \operatorname{Int}_0W$.} 

		   As in \cite[Section~5.2]{Gutman-Qiao-Tsukamoto2019}, choose a rapidly
decreasing function $\chi:\mathbb R\to\mathbb R$ such that
$
\operatorname{supp}\widehat{\chi}\subset [-\delta/8,\delta/8]$ and 
$\widehat{\chi}(0)=1$. Thus $\int_{\mathbb R}\chi(t)\,dt=\widehat{\chi}(0)=1$. Define
\begin{equation}\label{eq:K}
    C_{\chi}:=\int_{\mathbb R}|\chi(t)|\,dt<\infty
\end{equation} 
The function $\chi$ extends to an entire function on $\mathbb C$, still denoted by
			$\chi$, and for every integer $\ell\ge0$ there is
			$C_\ell>0$ such that $|\chi(z)|\le C_\ell(1+|z|)^{-\ell}
			e^{(\pi\delta/4)|\operatorname{Im}z|}$ for all
			$z\in\mathbb C$.
			Moreover, $\int_{\mathbb R}\chi(z-t)\,dt=1$ for every
			$z\in\mathbb C$.

	Set $b=a+\delta/2$ and, for $L>1$, define $$\Theta_L(z):=e^{\pi i\,bz}\sin(\pi z/L).$$ Its zero set is
		$L\mathbb Z$.
		
	Let $\mathcal W=\{W_n\}_{n\in\mathbb Z}$ be a tiling of $\mathbb R$ by
	(compact) intervals (some $W_n$ may be empty), namely

	\begin{itemize}
		\item $\mathbb{R}= \bigcup_{n \in \mathbb{Z}} W_n$.
		\item $\operatorname{Int}(W_m)\cap \operatorname{Int}(W_n)=\varnothing$
	 whenever $m\neq n.$
	\end{itemize}
   Consider the entire function
\[
A_n(z):=\int_{W_n}\chi(z-t)\,dt
\]
This is a smoothed indicator of the tile $W_n$. Since the tiles cover
$\mathbb R$ and $\int_{\mathbb R}\chi(z-t)\,dt=1$, the functions
$A_n$ form a smooth partition of unity.
For $x\in\mathbb R$, we may write $A_n(x)=(\chi*1_{W_n})(x)$. Thus
\begin{equation}\label{eq:supp_A_n}
    \operatorname{supp}\widehat{A_n}
\subseteq
\operatorname{supp}\widehat{\chi}
\subseteq
\left[-\frac{\delta}{8},\frac{\delta}{8}\right].
\end{equation}
\begin{defn}
Let $\mathcal W=\{W_n\}_{n\in\mathbb Z}$ be a tiling of $\mathbb R$ as above. For $L > 1$ we define the \textbf{tiling-like band-limited map}
	$\Phi_L(\mathcal{W}) : \mathbb{C}\to \mathbb{C}$ by
	\begin{equation}\label{eq:def_blmap_tiling}
		\Phi_L(\mathcal{W})(z)
		:= \sum_{n \in \mathbb{Z}} \Theta_L(z - n)A_n(z) =\sum_{n \in \mathbb{Z}} \Theta_L(z - n)
		\int_{W_n} \chi(z - t)\, dt.
	\end{equation}
    \end{defn}
		The decay of $\chi$ makes this series locally uniformly
		convergent on $\mathbb C$; in particular, $\Phi_L(\mathcal W)$ is
		an entire function on $\mathbb C$.
				Denote
		\begin{equation}\label{eq:def_theta_L}\Omega:=\{z\in\mathbb C:|\operatorname{Im}z|\le1\},\quad 
		\theta_L:=\inf\bigl\{|\Theta_L(z)|:z\in\Omega,
		\ |z-Ln|\ge \frac{1}{4}\text{ for all }n\in\mathbb Z\bigr\}>0.
		\end{equation}
        The quantity $\theta_L$ is positive because the zero set of
$\Theta_L$ is exactly $L\mathbb Z$. For $z=x+iy\in\Omega$, we have
\begin{equation}\label{eq:theta_modulus}
|\Theta_L(x+iy)|^2
=e^{-2\pi b y}
\left(
\sin^2\!\left(\frac{\pi x}{L}\right)
+\sinh^2\!\left(\frac{\pi y}{L}\right)
\right).
\end{equation}
Hence $\Theta_L$ can vanish only at the points $Ln$. The condition
$|z-Ln|\ge 1/4$ for every $n\in\mathbb Z$ keeps $z$ uniformly away from
these zeros. Moreover,
\eqref{eq:theta_modulus} is $L$-periodic in the real variable $x$.
Therefore it suffices to minimize it on a compact region.

\begin{lem}[Persistence of zeros of $\Theta_L$]\label{lem:zero_Theta}
			Let $\varphi:\mathbb C\to\mathbb C$ be holomorphic, let
			$S\subset\mathbb C$, and suppose
			$|\varphi(z)-\Theta_L(z)|<\theta_L$ for every $z\in S$.
			Then the following hold.
			
		\begin{enumerate}
				\item $\{z\in S\cap\Omega:\varphi(z)=0\}\subset
				\bigcup_{m\in\mathbb Z}\overline{B}_{1/4}(Lm)$.
				
				\item For every $m\in\mathbb Z$ with $\overline{B}_{1/4}(Lm)\subset S$,
				$\varphi$ has one zero in $\overline{B}_{1/4}(Lm)$ and this zero is simple.
			\end{enumerate}
		\end{lem}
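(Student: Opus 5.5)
Part (1) follows directly from the definition of $\theta_L$ in \eqref{eq:def_theta_L}. Let $z\in S\cap\Omega$ with $\varphi(z)=0$, and suppose $|z-Lm|>1/4$ for every $m\in\mathbb Z$. Then $z$ lies in the set over which $\theta_L$ is an infimum, so $|\Theta_L(z)|\ge\theta_L$. On the other hand, $\varphi(z)=0$ gives $|\Theta_L(z)|=|\varphi(z)-\Theta_L(z)|<\theta_L$, a contradiction. Hence $|z-Lm|\le1/4$ for some $m$, which is exactly the claimed inclusion.

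For part (2), fix $m$ with $\overline{B}_{1/4}(Lm)\subset S$ and apply Rouch\'e's theorem on this disk. On the boundary circle $|z-Lm|=1/4$ we have $|\operatorname{Im}z|\le1/4\le1$, so $z\in\Omega$. Since $L>1$, the distance from $z$ to any other lattice point $Ln$ with $n\neq m$ is at least $L-1/4\ge1/4$. Thus every boundary point lies in the set defining $\theta_L$, giving $|\Theta_L(z)|\ge\theta_L$. Because the boundary is contained in $S$, the hypothesis yields the strict inequality
\[
|\varphi(z)-\Theta_L(z)|<\theta_L\le|\Theta_L(z)|
\]
on the whole circle. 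In particular neither $\varphi$ nor $\Theta_L$ vanishes there, and Rouch\'e's theorem applies: $\varphi$ and $\Theta_L$ have the same number of zeros, counted with multiplicity, in the open disk $B_{1/4}(Lm)$.

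It remains to count the zeros of $\Theta_L$ in this disk. Its zero set is $L\mathbb Z$, and the only lattice point in $B_{1/4}(Lm)$ is $Lm$, because $L>1/2$. This zero is simple: the derivative is
\[
\Theta_L'(Lm)=e^{\pi ibLm}\frac{\pi}{L}\cos(\pi m)\neq0.
\]
Hence $\varphi$ has exactly one zero, counted with multiplicity, in the open disk, and that zero is simple. Since $\varphi$ has no zeros on the boundary circle, this is also its only zero in the closed disk $\overline{B}_{1/4}(Lm)$.

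The only delicate point is verifying that the boundary circle lies both in $\Omega$ and at distance at least $1/4$ from $L\mathbb Z$, so that the lower bound $|\Theta_L|\ge\theta_L$ holds there. Both facts follow from $L>1$.
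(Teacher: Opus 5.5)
Your proof is correct and follows the paper's argument: part (1) is the same contradiction with the definition of $\theta_L$, and part (2) applies Rouch\'e's theorem on $\partial\overline{B}_{1/4}(Lm)$ using $|\varphi-\Theta_L|<\theta_L\le|\Theta_L|$. You also supply details the paper leaves implicit, namely that the boundary circle lies in the region defining $\theta_L$, that $Lm$ is a simple zero of $\Theta_L$, and that $\varphi$ has no zeros on the circle.
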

		
		\begin{proof}
			For \textup{(1)}, a zero $z\in S\cap\Omega$ outside the
			indicated disks would satisfy
			$|\Theta_L(z)|=|\Theta_L(z)-\varphi(z)|<\theta_L$, a contradiction to \eqref{eq:def_theta_L}. For
			\textup{(2)}, also by \eqref{eq:def_theta_L}, on $\partial \overline{B}_{1/4}(Lm)$ we have
			$|\varphi-\Theta_L|<\theta_L\le|\Theta_L|$. Rouch\'e's theorem shows that
			$\varphi$ and $\Theta_L$ have the same number of zeros in the disk; hence
			$\varphi$ has one zero in the disk, counted with multiplicity.
		\end{proof}
As $\chi$ is rapidly decaying on the real line and $\Theta_L$ is bounded on the strip $\Omega$, one may choose $E=E(L)>0$ so large that
	\begin{equation}\label{eq:def_E_L}
	2\|\Theta_L\|_{L^\infty(\Omega)}
	\sup_{z\in\Omega}\int_{
    \mathbb R\setminus[\operatorname{Re}z-E,\operatorname{Re}z+E]}
	|\chi(z-t)|\,dt<\theta_L.
	\end{equation}
\begin{rem}\label{rem:Phi_L_naming}
    The name \textit{tiling-like band-limited map} for
	$\Phi_L(\mathcal{W})$ is motivated by the fact that by the proof of Lemma~\ref{lem:MP_Phi}\textup{(2)},\textup{(3)} below on the $E$-interior of each tile $W_n$, the function
$\Phi_L(\mathcal W)$ is uniformly $\theta_L$-close to the translated function
$\Theta_L(\,\cdot-n)$.
\end{rem}
\begin{lem}
	[Main properties of $\Phi_L(\mathcal W)$]\label{lem:MP_Phi}
	\indent
	\begin{enumerate}
		\item The restriction of $\Phi_L(\mathcal W)$ to $\mathbb R$ is bounded
		and continuous, with $\|\Phi_L(\mathcal W)\|_{L^\infty(\mathbb R)}\le C_{\chi}$. If
		$L>4/\delta$, the Fourier transform of $\Phi_L(\mathcal W)\big|_{\mathbb{R}}$ is supported in
		$(a/2,a/2+\delta/2)$.
		
		\item Let $E=E(L)$ be the constant chosen in \eqref{eq:def_E_L}. If $n\in\mathbb Z$, $z\in\Omega$,
		$\operatorname{Re}z\in\operatorname{Int}_E W_n$, and
		$\Phi_L(\mathcal W)(z)=0$, then $z\in \overline{B}_{1/4}(n+Lm)$ for some
		$m\in\mathbb Z$.
		
		\item 
        If $n,m\in\mathbb Z$ and
$n+Lm\in\operatorname{Int}_{E+1}W_n$, then
$\Phi_L(\mathcal W)$ has exactly one zero in
$\overline{B}_{1/4}(n+Lm)$, and this zero is simple.
	\end{enumerate}
\end{lem}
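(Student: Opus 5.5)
For \textup{(1)}, I will bound the series on the real line directly. For real $x$, $|\Theta_L(x-n)|=|\sin(\pi(x-n)/L)|\le1$, so
\[
|\Phi_L(\mathcal W)(x)|\le\sum_n\int_{W_n}|\chi(x-t)|\,dt=\int_{\mathbb R}|\chi(x-t)|\,dt=C_\chi,
\]
using that the tiles cover $\mathbb R$ and overlap only in a null set. Continuity follows from the locally uniform convergence already noted. For the spectrum, I will expand
\[
\Theta_L(x-n)=\frac{1}{2i}\Bigl(e^{\pi i b(x-n)}e^{\pi i(x-n)/L}-e^{\pi i b(x-n)}e^{-\pi i(x-n)/L}\Bigr).
\]
Its Fourier transform is supported in $\{b/2\pm 1/(2L)\}$. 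Multiplying by $A_n$, whose transform lies in $[-\delta/8,\delta/8]$ by \eqref{eq:supp_A_n}, convolves the spectra. So each summand has spectrum in
\[
\Bigl[\tfrac b2-\tfrac1{2L}-\tfrac\delta8,\ \tfrac b2+\tfrac1{2L}+\tfrac\delta8\Bigr].
\]
With $b=a+\delta/2$ and $L>4/\delta$ we have $1/(2L)<\delta/8$, so this interval lies in $(a/2,a/2+\delta/2)$. The series converges boundedly and locally uniformly, hence in $\mathcal S'$, so the support of the Fourier transform of the sum lies in the same closed interval.

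For \textup{(2)} and \textup{(3)}, the key step is to show that $\Phi_L(\mathcal W)$ is $\theta_L$-close to $\Theta_L(\cdot-n)$ near the $E$-interior of $W_n$. Since $\sum_k A_k\equiv1$, I can write
\[
\Phi_L(\mathcal W)(z)-\Theta_L(z-n)=\sum_{k}\bigl(\Theta_L(z-k)-\Theta_L(z-n)\bigr)A_k(z)=\sum_{k\ne n}\bigl(\Theta_L(z-k)-\Theta_L(z-n)\bigr)\int_{W_k}\chi(z-t)\,dt.
\]
Take $z\in\Omega$ with $\operatorname{Re}z\in\operatorname{Int}_EW_n$. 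Then $[\operatorname{Re}z-E,\operatorname{Re}z+E]\subset W_n$, so for $k\ne n$ the set $W_k$ meets that interval only in a null set. Also $z-k\in\Omega$, so each difference is at most $2\|\Theta_L\|_{L^\infty(\Omega)}$. The total error is therefore at most
\[
2\|\Theta_L\|_{L^\infty(\Omega)}\int_{\mathbb R\setminus[\operatorname{Re}z-E,\operatorname{Re}z+E]}|\chi(z-t)|\,dt<\theta_L
\]
by \eqref{eq:def_E_L}.

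Now set $\varphi(w)=\Phi_L(\mathcal W)(w+n)$ and
\[
S=\{w\in\Omega:\operatorname{Re}w+n\in\operatorname{Int}_EW_n\}.
\]
Then $|\varphi-\Theta_L|<\theta_L$ on $S$.

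\textup{(2)} now follows from Lemma~\ref{lem:zero_Theta}\textup{(1)} applied to the zero $w=z-n\in S$.

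For \textup{(3)}, I need $\overline B_{1/4}(Lm)\subset S$. This is the only delicate point. Points of the disk lie in $\Omega$, since $1/4\le1$. Their real parts $s$ lie within $1/4$ of $n+Lm$. The hypothesis $n+Lm\in\operatorname{Int}_{E+1}W_n$ gives $[n+Lm-E-1,n+Lm+E+1]\subset W_n$, because $n+Lm\in W_n$ and this interval misses the complement. Hence $[s-E,s+E]\subset W_n$, so $s\in\operatorname{Int}_EW_n$. Lemma~\ref{lem:zero_Theta}\textup{(2)} then gives exactly one simple zero of $\varphi$ in $\overline B_{1/4}(Lm)$, i.e.\ of $\Phi_L(\mathcal W)$ in $\overline B_{1/4}(n+Lm)$.

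I expect the main care to be in the distributional justification of the spectrum in \textup{(1)} and in the $\operatorname{Int}_E$ bookkeeping; the remaining steps are direct.
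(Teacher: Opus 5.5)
Your proposal is correct and follows the paper's proof step for step. For (1) you use the same pointwise bound and the same two-frequency expansion of $\Theta_L(\cdot-n)$. For (2) and (3) you use the same partition-of-unity estimate against \eqref{eq:def_E_L}, followed by the translated Lemma~\ref{lem:zero_Theta}. The only differences are cosmetic. You spell out the $\mathcal S'$-convergence of the series, which the paper leaves implicit. You also take $S$ to be the whole region over $\operatorname{Int}_EW_n$ rather than the disk $\overline{B}_{1/4}(n+Lm)$ itself.
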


\begin{proof}
	The proof follows the one-dimensional specialization of the argument in
	\cite[Section~5.2]
	{Gutman-Qiao-Tsukamoto2019}. Since $\Phi_L(\mathcal W)$ is entire, its
	restriction to $\mathbb R$ is continuous. Moreover,
	$\|\Theta_L\|_{L^\infty(\mathbb R)}=1$, and hence by \eqref{eq:K}, for $t\in\mathbb R$,
	$|\Phi_L(\mathcal W)(t)|
	\le \int_{\mathbb R}|\chi(t-s)|\,ds=C_{\chi}.$
	Note 
    $$
\Theta_L(x-n)
=e^{\pi i b(x-n)}
  \sin\!\left(\frac{\pi(x-n)}{L}\right)
=c_{n,L,+}\,e^{2\pi i\xi_+x}
  +c_{n,L,-}\,e^{2\pi i\xi_-x},
$$  
where $c_{n,L,+},c_{n,L,-}$ depend only on $n$ and $L$, and
$
\xi_\pm=\frac b2\pm\frac1{2L}.
$
Thus from \eqref{eq:supp_A_n} and  \eqref{eq:def_blmap_tiling}, we conclude
	\[
	\operatorname{supp}\mathcal F\bigl(\Phi_L(\mathcal W)|_{\mathbb R}\bigr)
	\subset
	\left[\frac b2-\frac1{2L}-\frac{\delta}{8},
	\frac b2+\frac1{2L}+\frac{\delta}{8}\right].
	\]
	Since $b=a+\delta/2$, this interval is contained in
	$(a/2,a/2+\delta/2)$ whenever $\frac{1}{2L}<\frac{\delta}{8}$. This proves \textup{(1)}.
	
	For \textup{(2)} and \textup{(3)}, suppose that $z\in\Omega$ and
	$\operatorname{Re}z\in\operatorname{Int}_E W_n$. Then
	$[\operatorname{Re}z-E,\operatorname{Re}z+E]\subset W_n$. Since
	$\int_{\mathbb R}\chi(z-t)\,dt\equiv 1$ for $z$ real-valued, the same holds for $z$ complex-valued. We obtain
\[
\begin{aligned}
&|\Phi_L(\mathcal W)(z)-\Theta_L(z-n)|
=|\sum_{j\in\mathbb Z}
\bigl(\Theta_L(z-j)-\Theta_L(z-n)\bigr)
\int_{W_j}\chi(z-t)\,dt|\\
&\le
2\|\Theta_L\|_{L^\infty(\Omega)}
\int_{\mathbb R\setminus W_n}|\chi(z-t)|\,dt
\le
2\|\Theta_L\|_{L^\infty(\Omega)}
\int_{\mathbb R\setminus[\operatorname{Re}z-E,\operatorname{Re}z+E]}
|\chi(z-t)|\,dt
\overset{\eqref{eq:def_E_L}}{<}\theta_L.
\end{aligned}
\]
If $\Phi_L(\mathcal W)(z)=0$, Lemma~\ref{lem:zero_Theta}\textup{(1)},
	applied after translation by $n$, gives
	$z\in \overline{B}_{1/4}(n+Lm)$ for some $m\in\mathbb Z$. This proves
	\textup{(2)}.
	
	Finally, suppose that
	$n+Lm\in\operatorname{Int}_{E+1}W_n$. For every
	$z\in \overline{B}_{1/4}(n+Lm)$, it is easy to see that $z\in\Omega$ and
	$[\operatorname{Re}z-E,\operatorname{Re}z+E]\subset W_n$. Thus $z$ verifies the assumptions of the previous paragraph and therefore the resulting inequality. Using a translated by $n$  version of Lemma~\ref{lem:zero_Theta}\textup{(2)} with $S:=\overline{B}_{1/4}(n+Lm)$ we conclude $\Phi_L(\mathcal W)$ has exactly one zero in 
	$\overline{B}_{1/4}(n+Lm)$ and this zero is simple. This proves \textup{(3)}.
\end{proof}

\subsection{Dynamical Voronoi diagram.}

As in \cite[Section~5.3]{Gutman-Qiao-Tsukamoto2019}, for every
$M\in\mathbb N$, the marker
property and compactness allow us to fix an
integer $N_M\ge M$ and a function $h_M\in C(X,[0,1])$ such that
\[
(\operatorname{supp}h_M)\cap T^j(\operatorname{supp}h_M)=\varnothing
\quad(0<|j|<M),\qquad
X=\bigcup_{|j|<N_M}T^j\{h_M=1\}.
\]
For $x \in X$ consider the following discrete set in $\mathbb{R}^{2}$:
\[
\left\{
\left( n, \frac{1}{h_M(T^n x)} \right)
\;\middle|\;
n \in \mathbb{Z},\; h_M(T^n x) > 0
\right\}.
\]

Define a convex set $V_0^M(x,n)$ for $n \in \mathbb{Z}$ as follows: If $h_M(T^n x) = 0$ then we set $V_0^M(x,n) = \varnothing$. If $h_M(T^n x) > 0$ then we define it as the set of all $u \in \mathbb{R}^{2}$ satisfying
\[
\forall m \in \mathbb{Z} \text{ with } h_M(T^m x) > 0:
\quad
\left| u - \left(n, \frac{1}{h_M(T^n x)} \right) \right|
\le
\left| u - \left(m, \frac{1}{h_M(T^m x)} \right) \right|.
\]

The sets $V_0^M(x,n)$ form a tiling of $\mathbb{R}^{2}$, i.e.  
$\mathbb{R}^{2}
=
\bigcup_{n \in \mathbb{Z}} V_0^M(x,n).
$
Denote $H_M:=(N_M+1)^2$ and take the horizontal section of this tiling at
height $-H_M$:
\[
W_0^M(x,n):=\{t\in\mathbb R:(t,-H_M)\in V_0^M(x,n)\}.
\]
Each $W_0^M(x,n)$ is either empty or a compact interval, and these
intervals cover $\mathbb R$ and have pairwise disjoint interiors.
Hence $\{W_0^M(x,n)\}_{n\in\mathbb Z}$ is a tiling of $\mathbb R$.

For $j,n\in\mathbb Z$, translation of the sites gives
$V_0^M(T^jx,n)=V_0^M(x,n+j)-(j,0)$ and consequently
$W_0^M(T^jx,n)=-j+W_0^M(x,n+j)$. Moreover, if
$\operatorname{Int}W_0^M(x,n)\ne\varnothing$, then
$W_0^M(y,n)$ converges to $W_0^M(x,n)$ in the Hausdorff topology as
$y\to x$.

The next lemma is the one-dimensional version of
\cite[Lemma~5.10]{Gutman-Qiao-Tsukamoto2019}, and we give a simple proof as follows.
	\begin{lem}\label{Lem:bdry_small}
		For any $r > 0$,		\[
		\lim_{M \to \infty}
		\limsup_{R \to \infty}
		\sup_{x \in X}
		\frac{\Leb\left(\bigcup_{n \in \mathbb{Z}}
			\partial_r W_0^M(x,n) \cap [-R,R] \right)}{2R}
		= 0.
		\]
	\end{lem}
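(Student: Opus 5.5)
The plan is to show that every tile $W_0^M(x,n)$ with nonempty interior is long, of length of order $M$, while the number of tile endpoints is controlled, so the $r$-boundary set is small. Since the tiles are intervals, $\bigcup_n \partial_r W_0^M(x,n)$ is contained in the union of the intervals $[e-r,e+r]$ over the endpoints $e$ of the tiles. Hence
\[
\Leb\Bigl(\bigcup_n\partial_rW_0^M(x,n)\cap[-R,R]\Bigr)\le 2r\cdot\#\{\text{endpoints in }[-R-r,R+r]\}.
\]
It therefore suffices to show that consecutive endpoints are at distance $\gtrsim cM$ for a constant $c>0$ independent of $x$. Then the count of endpoints in $[-R-r,R+r]$ is at most $(2R+2r)/(cM)+2$, and the ratio is $O(r/M)$ as $R\to\infty$, which tends to $0$ as $M\to\infty$.

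The key step is the geometry of the section at height $-H_M$. The sites are $(n,s_n)$ with $s_n=1/h_M(T^nx)\in[1,\infty)$, and distinct sites satisfy $|n-m|\ge M$ because of the disjointness of $\operatorname{supp}h_M$ and its translates. Every window of length $2N_M$ contains a site with $s_n=1$. For a point $(t,-H)$ with $H=H_M$ and $\rho:=s+H$, we have
\[
|(t,-H)-(n,s_n)|^2=(t-n)^2+(s_n+H)^2 .
\]
So the restriction of the Voronoi diagram to the line $y=-H$ is the power diagram on $\mathbb R$ with sites $n$ and weights $-(s_n+H)^2$. A tile of site $n$ is the set of $t$ where $(t-n)^2+(s_n+H)^2$ is minimal. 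The bisector between sites $n<m$ lies at
\[
t^*_{n,m}=\frac{n+m}{2}+\frac{(s_m+H)^2-(s_n+H)^2}{2(m-n)} .
\]

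Next I will bound how far the bisector can be pushed and show that every site which owns a nonempty tile is among the relevant ones. First, only sites with $s_n\le$ some bound can own points. Any point $t$ has a site $m$ with $s_m=1$ within distance $N_M$, so the competitor value is at most $N_M^2+(1+H)^2$. Second, for any owning site the shift term satisfies
\[
\frac{|(s_m+H)^2-(s_n+H)^2|}{2(m-n)}\le \frac{|s_m-s_n|(s_m+s_n+2H)}{2M}.
\]
This is the main obstacle: one must show that owning sites have $s_n$ bounded, roughly $s_n\le 1+O(N_M^2/H)=O(1)$ because $H=(N_M+1)^2$. I would argue as follows. If $(t-n)^2+(s_n+H)^2\le N_M^2+(1+H)^2$, then $(s_n-1)(s_n+1+2H)\le N_M^2$, so $s_n-1\le N_M^2/(2H)<1$. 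Hence all owning sites have $s_n\in[1,2)$. For two owning neighbours the shift is then at most
\[
\frac{(2+2H+2)}{2M}\cdot 1\approx \frac{H}{M},
\]
which is too crude. I will instead compare the bisector with the midpoint directly. Owning sites $n$ and $m$ are consecutive owners, so their common endpoint lies in both tiles. Using the ownership inequality at the endpoint $t^*$ together with the site $s=1$ within $N_M$, I get $|t^*-n|,|t^*-m|\le N_M$. Consequently consecutive owners satisfy $m-n\le 2N_M$. This alone does not give long tiles, so I will bound the tile length from below using $s_n-1< N_M^2/(2H)\le 1/2$.

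This gives $|(s_m+H)^2-(s_n+H)^2|\le \tfrac12(2H+4)$. Dividing by $2(m-n)\ge 2M$ shows that the bisector is displaced from the midpoint by at most $\approx H/(2M)$, which is still large. To handle this I will refine the estimate: the weights $-(s+H)^2$ differ by $\approx 2H(s_m-s_n)$, and ownership forces $|s_m-s_n|\lesssim N_M^2/H$ only up to the scale $N_M$. If this refinement fails, I will fall back on the weaker but sufficient claim. That claim is that the number of endpoints in any window of length $N_M$ is bounded by $N_M/M+1$, since owners are $M$-separated. This bounds the boundary measure in $[-R,R]$ by $2r(2R/M+O(1))$, again giving a ratio $O(r/M)\to0$. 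This fallback needs only the $M$-separation of the sites and the fact that each site owns at most one interval, so I will present it as the actual proof.
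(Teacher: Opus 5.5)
Your final argument is, in substance, the paper's: tiles sit near their sites, the sites with $h_M(T^nx)>0$ are $M$-separated, so only $O(R/M)+O_M(1)$ tiles meet $[-R-r,R+r]$, and each contributes at most $4r$ of $r$-boundary. However, the justification in your last paragraph is false as stated. You say the count ``needs only the $M$-separation of the sites and the fact that each site owns at most one interval''. Those two facts do not bound the number of cells in a window. For a one-dimensional power diagram with sites $0,M,2M,\dots,kM$, one can choose the weights so that the consecutive bisectors sit at $0<1/k<\dots<(k-1)/k$, which puts $k-1$ cells inside $[0,1]$.

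What excludes this here is the localization
\[
t\in W_0^M(x,n)\ \Longrightarrow\ |t-n|\le N_M ,
\]
which you actually derived mid-proposal. With $s_k=1/h_M(T^kx)$, $H=H_M$, and a site $m$ with $s_m=1$ and $|t-m|\le N_M$, the Voronoi inequality gives $(t-n)^2\le (t-m)^2+(1+H)^2-(s_n+H)^2\le (t-m)^2$, because $s_n\ge 1$. This plays exactly the role of the paper's inclusion of $J_R(x)$ in $[-R-C_M,R+C_M]$. It is also slightly sharper than the paper's bound $|t-n|<C_M$, which comes from the triangle inequality.

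You need to invoke this localization explicitly. Every endpoint in $[-R-r,R+r]$ lies in a tile whose site is in $[-R-r-N_M,R+r+N_M]$. Hence there are at most $2\bigl(2(R+r+N_M)/M+1\bigr)$ such endpoints, and the $\limsup_{R\to\infty}$ of the ratio is at most $4r/M$, as in the paper.

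Two smaller points. First, the per-window bound $N_M/M+1$ is not what localization gives; for a window of length $N_M$ one gets something like $3N_M/M+2$. This is harmless, since any bound of the form $O(N_M/M)+O(1)$ suffices because $N_M\ge M$. Second, drop the opening strategy rather than keeping it as the main line. Nothing you prove forces consecutive endpoints to be at distance of order $M$, and the argument never needs it.
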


\begin{proof}
	Fix $r>0$ and $M\in\mathbb N$, and set
	$C_M:=\sqrt{(N_M+1)^2+(H_M+1)^2}$. For $R>0$, let
	$J_R(x)$ be the set of all $n\in\mathbb Z$ such that
	$W_0^M(x,n)\cap[-R,R]\ne\varnothing$. We first show that
	\begin{equation}\label{eq:inclusion}
	J_R(x)\subset
	\{n\in\mathbb Z:h_M(T^nx)>0\}\cap[-R-C_M,R+C_M].
	\end{equation}
	Let $n\in J_R(x)$ and choose
	$u\in W_0^M(x,n)\cap[-R,R]$. Then $h_M(T^nx)>0$. Choose an integer
	$p$ nearest to $u$. Since
	$X=\bigcup_{|j|<N_M}T^j\{h_M=1\}$, there is an integer
	$j$ with $|j|<N_M$ such that $h_M(T^{p-j}x)=1$. Denote $m=p-j$.
	Then $|u-m|<N_M+1$. Since $(u,-H_M)\in V_0^M(x,n)$, the definition of 
	Voronoi tiling gives
	\[
		|u-n|
		\le
		\left|(u,-H_M)-
		\left(n,\frac{1}{h_M(T^nx)}\right)\right|		\le	|(u,-H_M)-(m,1)|<C_M.
	\]
	Hence $|n|\le R+C_M$, proving \eqref{eq:inclusion}.
	The set $\{n\in\mathbb Z:h_M(T^nx)>0\}$ is $M$-separated by the
	disjointness of the translates of $\operatorname{supp}h_M$. Therefore
	$\#J_R(x)
	\le 2(R+C_M)/M+1.$
	
	Each $W_0^M(x,n)$ is an interval, so its $r$-boundary has measure at
	most $4r$. If $n\notin J_R(x)$, then $W_0^M(x,n)$ does not meet
	$[-R,R]$; hence its $r$-boundary can meet this interval only within
	distance $r$ of $-R$ or $R$. The union of all these remaining
	contributions has measure at most $2r$. It follows that
	\[
		\Leb\left([-R,R]\cap\bigcup_{n\in\mathbb Z}
		\partial_rW_0^M(x,n)\right)
		\le4r\,\#J_R(x)+2r\le
		4r\left(\frac{2(R+C_M)}{M}+1\right)+2r.
	\]
	Since $C_M$ depends only on $M$, we have
	\[
	\limsup_{R\to\infty}\sup_{x\in X}
	\frac{\Leb\left([-R,R]\cap\bigcup_n\partial_rW_0^M(x,n)\right)}{2R}
	\le\frac{4r}{M}.
	\]
	Letting $M\to\infty$ completes the proof.
\end{proof}

\subsection{Tiling and tax functions from tiling-like band-limited maps.}\label{subsection:tiling_band_maps}

Fix $A,L_0>0$ and an integer $L=L(A,L_0)$ satisfying
	\begin{equation}\label{eq:L_large}
	L>1000(A+1)(L_0+2).
	\end{equation}
	Let $E=E(L)$ be the constant chosen in \eqref{eq:def_E_L}. By
	Lemma~\ref{Lem:bdry_small}, choose $M=M(L_0,L,A)\in\mathbb N$ so that
	$W_0(x,n):=W_0^M(x,n)$ satisfies
	\begin{equation}\label{eq:cont_M}
		\limsup_{R \to \infty}
		\left(
		\sup_{x \in X}
		\frac{
			\Leb\left(\bigcup_{n \in \mathbb{Z}}
			\partial_{E + 2(L+1) + L_0 + 1} W_0(x,n)\cap [-3R,3R]
			\right)
		}{R}
		\right)
		< \frac{1}{L}.
	\end{equation}
	
	For each $x\in X$, apply the construction $\Phi_L$ in
	\eqref{eq:def_blmap_tiling} to the tiling
	$\{W_0(x,n)\}_{n\in\mathbb Z}$ constructed above, and denote the
	resulting function by 
	$\Phi(x):=\Phi_L(\{W_0(x,n)\}_{n\in\mathbb Z})$.
	  We claim that $\Phi(x)\not\equiv0$. By \eqref{eq:cont_M}, for sufficiently large
$R$, $\Leb\left(
\bigcup_{n\in\mathbb Z}\partial_EW_0(x,n)
\cap[-3R,3R]
\right)<\frac{R}{L}.$
Moreover
$
\Leb\left(
\bigcup_{j\in\mathbb Z}[j-1/4,j+1/4]
\cap[-3R,3R]
\right)
\le 3R+1.
$
Since $[-3R,3R]$ has length $6R$ and $L>1$, these two sets do not cover
$[-3R,3R]$ . Choose
\begin{equation}\label{eq:t}
t\in[-3R,3R]\setminus
\left(
\bigcup_{n\in\mathbb Z}\partial_EW_0(x,n)
\cup
\bigcup_{j\in\mathbb Z}[j-1/4,j+1/4]
\right).
\end{equation}
We conclude there is an $n\in\mathbb Z$
such that
$t\in\operatorname{Int}_E W_0(x,n)$. Assume $\Phi(x)(t)=0$. By Lemma~\ref{lem:MP_Phi}\textup{(2)},  $t\in\bigcup_{j\in\mathbb Z}[n+Lj-1/4,n+Lj+1/4]$. This contradicts however \eqref{eq:t}.
Hence $\Phi(x)\not\equiv0$.	
	
	\begin{lem}	[Basic properties of $\Phi(x)$]
		\label{Lem:prop_Phi}
		The map $\Phi$ has the following properties.
		\begin{enumerate}
			\item 
			$\|\Phi(x)\|_{L^\infty(\mathbb{R})} \le C_{\chi}.$
			
			\item 
			If $L > 4/\delta$, the Fourier transform of $\Phi(x)|_{\mathbb{R}}$ is supported in $(a/2,a/2+\delta/2)$.
						
			\item
			$\Phi(T^n x)(z) = \Phi(x)(z+n)$
			for all $x \in X,\; n \in \mathbb{Z},\; z \in \mathbb{C}.$

			\item
			If a sequence $x_n$ in $X$ converges to $x$ then $\Phi(x_n)$ converges to $\Phi(x)$ uniformly over every compact subset of $\mathbb{C}$.
		\end{enumerate}
	\end{lem}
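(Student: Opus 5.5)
Items (1) and (2) are immediate: $\Phi(x)=\Phi_L(\{W_0(x,n)\}_{n})$ is the tiling-like band-limited map of a tiling of $\mathbb R$ by compact intervals. Lemma~\ref{lem:MP_Phi}\textup{(1)} therefore gives $\|\Phi(x)\|_{L^\infty(\mathbb R)}\le C_\chi$ and, when $L>4/\delta$, $\operatorname{supp}\mathcal F(\Phi(x)|_{\mathbb R})\subset(a/2,a/2+\delta/2)$.

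For (3), I will use the translation identity $W_0(T^jx,n)=-j+W_0(x,n+j)$ recorded above. We have
\[
\Phi(T^jx)(z)=\sum_n\Theta_L(z-n)\int_{W_0(x,n+j)-j}\chi(z-t)\,dt .
\]
Substituting $t=s-j$ turns the integral into $\int_{W_0(x,n+j)}\chi(z+j-s)\,ds$. Reindexing with $k=n+j$ gives $\Theta_L(z-n)=\Theta_L((z+j)-k)$. Hence the sum equals $\sum_k\Theta_L(z+j-k)A_k(z+j)=\Phi(x)(z+j)$. Rearranging the series is legitimate because it converges absolutely and locally uniformly.

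For (4), which is the only real work, fix a compact set $Q\subset\mathbb C$ with $|\operatorname{Im}z|\le c$ on $Q$. I split the series at a large index $R$.

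\textbf{Tail.} By \eqref{eq:inclusion}-type reasoning, or simply because tiles are intervals with disjoint interiors, the $W_0(y,n)$ with $|n|>R$ lie at distance at least roughly $R-C_M-\max_Q|z|$ from $Q$. The reason is the localization $|u-n|<C_M$ for $u\in W_0(y,n)$ shown in the proof of Lemma~\ref{Lem:bdry_small}, which is uniform in $y$. The bound $|\chi(z-t)|\le C_2(1+|z-t|)^{-2}e^{\pi\delta c/4}$ and boundedness of $\Theta_L$ on $\{|\operatorname{Im}z|\le c\}$ then make the tail $\sum_{|n|>R}$ uniformly small over $Q$ and over all $y\in X$.

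\textbf{Finite part.} For each $|n|\le R$ it suffices that $A_n^{y}(z)=\int_{W_0(y,n)}\chi(z-t)\,dt\to A_n^{x}(z)$ uniformly on $Q$ as $y\to x$. There are three cases.
\begin{enumerate}
\item If $\operatorname{Int}W_0(x,n)\neq\varnothing$, Hausdorff convergence of the intervals (stated above) gives convergence of their endpoints. The integrands are uniformly bounded on $Q\times[\text{bounded set}]$, so the integrals converge uniformly.
\item If $W_0(x,n)$ is empty or a single point, I will argue that $W_0(y,n)$ has length tending to $0$ and stays in a bounded region, so $A_n^y\to0=A_n^x$. Length tends to zero because the neighbouring tiles with nonempty interior converge in Hausdorff distance and jointly cover a neighbourhood of the degenerate tile's location, while tiles have disjoint interiors.
\item Case (2) is the main obstacle. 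If verifying it tile by tile is awkward, I will instead prove convergence of the finite partial sums through the total covering. For $y$ near $x$, the nondegenerate tiles of $x$ meeting a large window $[-R',R']$ have nearby counterparts covering all but a set of small measure of the window. The remaining tiles of $y$ therefore have total measure inside the window tending to $0$, and their contribution is small by the bound $|\chi|\le C$ on $Q$.
\end{enumerate}
Combining the tail estimate with the finite-part convergence gives (4).
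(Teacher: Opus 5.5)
Your proposal is correct and follows the paper's route: (1)--(2) are read off from Lemma~\ref{lem:MP_Phi}(1), and (3)--(4) come from the equivariance $W_0(T^jx,n)=-j+W_0(x,n+j)$ and the continuity of the tiles, which is all the paper's one-line proof cites. Your tail estimate via the uniform localization $|u-n|<C_M$ and your covering argument for degenerate tiles fill in details the paper leaves implicit.
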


\begin{proof}
	(1) and (2) immediately follow from Lemma~\ref{lem:MP_Phi} (1). (3) and (4) follow from the equivariance and continuity of the tiles $W_0(x, n).$
\end{proof}

Choose a continuous function $\alpha_1:\mathbb C\to[0,1]$ such that
$\alpha_1\equiv1$ on $\overline{B}_{1/4}(0)$ and
$\operatorname{supp}\alpha_1\subset B_{1/2}(0)$.
Since $\Phi(x)\not\equiv0$, its zeros are isolated. For $x\in X$ and
$n\in\mathbb Z$, define
\[
\nu(x,n):=\min\left\{1,\;
\sum_{\Phi(x)(z)=0}
\operatorname{ord}_z\Phi(x)\,\alpha_1(z-n)
\right\},
\]
where $\operatorname{ord}_z\Phi(x)$ denotes the multiplicity of $z$ as
a zero of $\Phi(x)$ and $\nu(x,n)$ is defined to be zero when the sum is empty. Only the zeros in $\overline{B}_{1/2}(n)$ contribute, so the
sum is finite.
By Lemma~\ref{Lem:prop_Phi}\textup{(4)}, locally uniform convergence of
$\Phi(x_j)$ to $\Phi(x)$ implies local convergence of their zeros, counting multiplicity. Hence the above weighted
sum depends continuously on $x$, and so does $\nu(x,n)$. Moreover,
Lemma~\ref{Lem:prop_Phi}\textup{(3)} gives
$\nu(T^\ell x,n)=\nu(x,n+\ell)$ for all $\ell,n\in\mathbb Z$.

We now consider the Voronoi tiling generated by $\nu$. Let $x \in X$ and consider
\[
\left\{
\left( n, \frac{1}{\nu(x,n)} \right)
\;\middle|\;
n \in \mathbb{Z},\; \nu(x,n) > 0
\right\}.
\]
If $\nu(x,n)=0$, define $V(x,n)=\varnothing$. Otherwise, define $V(x,n)$ as the set of $u \in \mathbb{R}^{2}$ satisfying
\[
\forall m \in \mathbb{Z} \text{ with } \nu(x,m) > 0:\quad
\left| u - \left(n, \frac{1}{\nu(x,n)} \right) \right|
\le
\left| u - \left(m, \frac{1}{\nu(x,m)} \right) \right|.
\]
These form a tiling of $\mathbb{R}^{2}$. Set $W(x,n)
=\{t\in\mathbb R:(t,0)\in V(x,n)\}.$ The sets $W(x,n)$ form a tiling of $\mathbb{R}$ and are also $\mathbb{Z}$-equivariant, namely
\[
W(T^n x, m) = -n + W(x, n+m)\quad \text{for each}\ n, m\in\mb Z, x\in X.
\]

For $j,m\in\mathbb Z$, Lemma~\ref{lem:MP_Phi}\textup{(2),(3)}
		give
		\begin{equation}\label{eq:nu_zeros}
		  		\begin{aligned}
			m\in\operatorname{Int}_{E+\frac{1}{2}}W_0(x,j)\setminus(j+L\mathbb Z)
			&\Longrightarrow \nu(x,m)=0,\\
			m\in(j+L\mathbb Z)\cap\operatorname{Int}_{E+1}W_0(x,j)
			&\Longrightarrow \nu(x,m)=1.
		\end{aligned}
	\end{equation}
Thus, deep inside each sufficiently large old tile $W_0(x,j)$,
the new tiles form a regular array of intervals of length $L$,
centered at points of $j+L\mathbb Z$. The relation between the tilings
$\mathcal W(x)=\{W(x,n)\}_{n\in\mathbb Z}$ and
$\mathcal W_0(x)=\{W_0(x,n)\}_{n\in\mathbb Z}$
is illustrated in Figure~\ref{fig:old-new-tilings}.
The quantitative properties of the new tiles needed below are
recorded in Lemma~\ref{lem:prop_W}.

\begin{figure}[htbp]
    \centering
    \includegraphics[width=\textwidth]{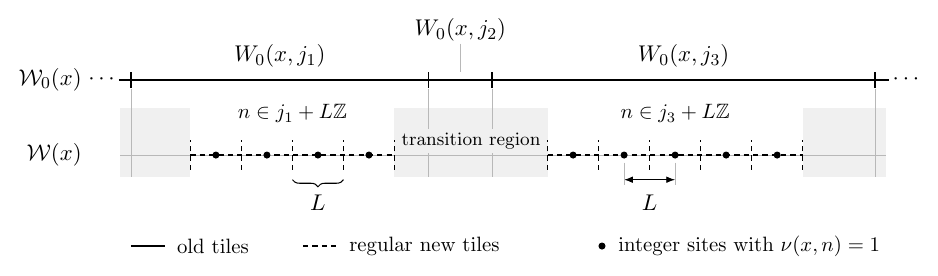}
    \caption{The relation between the old and new tilings.}
    \label{fig:old-new-tilings}
\end{figure}

Each $W(x,n)$ is a (compact) interval of $\mathbb{R}$ (see Lemma below) and depends continuously on $x \in X$ in the Hausdorff topology as in the case of $W_0(x,n)$. Namely, if
$\operatorname{Int}W(x,n)\ne\varnothing$, then
$W(y,n)$ converges to $W(x,n)$ in the Hausdorff topology as $y\to x$.
We write $W^{A,L_0,L}(x,n)$ when emphasizing the parameters. For $x \in X$, set	$\partial(x):=\bigcup_n\partial W(x,n)$ and define
$\operatorname{dist}_{\mathbb R}(t,\partial(x))
:=\inf_{s\in\partial(x)}|t-s|$. Finally, fix a continuous
$\alpha_2:\mathbb R\to[0,3]$ equal to $3$ on $(-\infty,L_0]$ and to
$0$ on $[L_0+1,\infty)$.
	
\begin{lem}[Basic properties of $W(x,n)$]\label{lem:prop_W}
	\indent
	\begin{enumerate}
		\item The diameter of $W(x,n)$ is bounded uniformly in $x \in X$ and $n \in \mathbb{Z}$. Moreover
		\[
		\sup_{x \in X} \sup_{n \in \mathbb{Z}} \sup_{u \in W(x,n)} |n - u| < \infty.
		\]
			
		\item Let $A,L_0,L=L(A,L_0)$  be the parameters fixed in the beginning of Subsection \ref{subsection:tiling_band_maps}. There exists $R_0=R_0(A,L_0,L)>0$ such that for
		all $x\in X$ and $u\in\mathbb Z$
		\[
		A \sum_{|n-u|\le 2R_0} \alpha_2\bigl(\operatorname{dist}_{\mathbb R}(n,\partial(x))\bigr)
		<
		\sum_{|n-u|\le R_0} \Leb(\operatorname{Int}_{L_0} W(x,n)),
		\]
		where both sums are over $n\in\mathbb Z$.
	\end{enumerate}
\end{lem}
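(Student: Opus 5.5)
For (1), I would first bound the gaps between consecutive active sites, i.e.\ integers $m$ with $\nu(x,m)>0$. Fix $t\in\mathbb R$. As in the proof of Lemma~\ref{Lem:bdry_small}, every point of $\mathbb R$ lies in some $W_0(x,j)$ with $h_M(T^jx)>0$, and $|t-j|<C_M$. I would argue that $t$ is within distance $L+2E+4$ of an integer $m$ with $\nu(x,m)=1$, or with $\nu(x,m)>0$. Inside a large old tile this follows from \eqref{eq:nu_zeros}. Near a boundary, or in a short tile, the argument is harder, and I would handle it in one of two ways. First, one can note that the zeros of $\Phi(x)$ in the strip are relatively dense: $\Phi(x)$ is an entire function of exponential type, band-limited near frequency $b/2$, and a positive proportion of its real-line behaviour is $\theta_L$-close to $\Theta_L$. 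This alone, however, does not give uniform density. The cleaner route is to use compactness. The function $x\mapsto\sup\{\text{gap length of active sites near }0\}$ is locally bounded, because the set of zeros in $\{|\operatorname{Re}z|\le R,|\operatorname{Im}z|\le 1/4\}$ varies upper-semicontinuously. Combined with equivariance, this reduces the task to showing that each $x$ has some active site at a bounded distance, which holds since $\Phi(x)\not\equiv0$ has zeros near $j+L\mathbb Z$ deep inside large tiles. Those large tiles occur within a bounded distance uniformly, by \eqref{eq:cont_M}.

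Given a uniform gap bound $G$, I would show that every $u\in W(x,n)$ is within a uniform distance of $n$. Here I follow the computation in the proof of \eqref{eq:inclusion}, with $(m,1/\nu(x,m))$ in place of $(m,1)$ and height $0$ in place of $-H_M$. There is a minor issue: $1/\nu$ may be large when $\nu$ is small. Since $\nu=1$ sites also exist within a bounded gap $G'$ (deep interiors of large tiles recur within bounded distance), comparing with such a site $m$ gives
\[
|u-n|\le|(u,0)-(n,1/\nu(x,n))|\le|(u,0)-(m,1)|\le G'+1 .
\]
This bounds both the diameter and $\sup|n-u|$.

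For (2), I would choose $R_0$ large and split $[u-2R_0,u+2R_0]$ into a good part and a bad part. The good part is the set of points lying in $\operatorname{Int}_{E+2(L+1)+L_0+1}W_0(x,j)$ for some $j$. On this set, by \eqref{eq:nu_zeros}, the new tiles are exactly the intervals $[j+Lk-L/2,\,j+Lk+L/2]$, of length $L$, so $\partial(x)$ is $L$-spaced there. The number of integers $n$ with $\operatorname{dist}(n,\partial(x))\le L_0+1$ in a good stretch of length $\ell$ is then at most $(2L_0+3)\ell/L+O(1)$, while the contribution $\sum\Leb(\operatorname{Int}_{L_0}W)$ is about $\ell(1-2L_0/L)$. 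On the bad part, which by \eqref{eq:cont_M} has measure less than $6R_0/L$ for large $R_0$ uniformly in $x$, I would bound the left side trivially by $3A$ per integer. The left side is therefore at most
\[
3A\Bigl(\frac{(2L_0+3)\cdot 4R_0}{L}+\frac{12R_0}{L}+O(1)\Bigr),
\]
and the right side is at least about $2R_0$ minus the bad measure, minus the boundary losses from (1). Finally, \eqref{eq:L_large}, i.e.\ $L>1000(A+1)(L_0+2)$, makes the left side smaller than the right once $R_0$ absorbs the $O(1)$ terms and the diameter bound from (1). The main obstacle I expect is part (1): obtaining a uniform bound on the gaps between active sites in boundary regions of the old tiling, and the uniformity in $x$ of the $\limsup$ in \eqref{eq:cont_M} when it is applied to windows centered at arbitrary $u$. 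For the latter I would use equivariance, replacing $x$ by $T^ux$ to recenter at $0$.
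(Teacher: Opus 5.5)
Your proposal is correct and is essentially the $k=1$ case of the proof of \cite[Lemma~5.15]{Gutman-Qiao-Tsukamoto2019}, which is the argument the paper invokes. For \textup{(1)}, sites with $\nu=1$ are uniformly relatively dense by \eqref{eq:cont_M}, \eqref{eq:nu_zeros} and equivariance, and comparing in the Voronoi definition with such a site bounds $|u-n|$. For \textup{(2)}, you split into good and bad parts, use the exact length-$L$ tiles deep inside old tiles, and balance the two sides with \eqref{eq:L_large}; the exactness of those tiles also uses that every site has height $\ge1$, so only sites within about $L/2+1$ can compete.

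Two small corrections. The compactness detour in \textup{(1)} is unnecessary, since \eqref{eq:cont_M} already gives the uniform density of $\nu=1$ sites. Its justification also points the wrong way: what it would need is persistence of zeros, i.e.\ continuity of $\nu(\cdot,m)$, not upper semicontinuity of zero sets. In \textup{(2)}, your ``$O(1)$ per good stretch'' must be summed over all good stretches. Their number is $O\bigl(\Leb(\text{bad part})/r'\bigr)+O(1)$ with $r'=E+2(L+1)+L_0+1$, because every component of $\bigcup_j\partial_{r'}W_0(x,j)$ has length at least $2r'$, so this total is again absorbed by \eqref{eq:L_large}.
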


	\begin{proof}
The equations \eqref{eq:nu_zeros} are the one-dimensional counterparts of
		\cite[proof of Lemma~5.15; Equation (5.10)]{Gutman-Qiao-Tsukamoto2019}. The remainder
		of that proof uses only these two implications, the Voronoi definition
		of $W(x,n)$, and the sparse-boundary estimate \cite[Equation (5.9)]{Gutman-Qiao-Tsukamoto2019}. In our setting, \eqref{eq:cont_M} is stronger than the $k=1$ case of
		 \cite[Equation (5.9)]{Gutman-Qiao-Tsukamoto2019}, while \eqref{eq:L_large} is precisely the $k=1$ case of \cite[Condition~5.13]{Gutman-Qiao-Tsukamoto2019}. Hence the argument of
		\cite[proof of Lemma~5.15]
		{Gutman-Qiao-Tsukamoto2019}, with the one-dimensional
		modifications, proves both \textup{(1)} and \textup{(2)}.
	\end{proof}
	
	Lemmas 6.3 and ~6.4 of \cite{Gutman-Tsukamoto2020} introduced a \textit{tax system}, which allows large tiles to compensate
	for the impossibility to construct good perturbations over small tiles. We refer the reader to discussion in \cite{Gutman-Tsukamoto2020} before these lemmas.    The paper \cite{Gutman-Qiao-Tsukamoto2019} reused the tax system but applied it to a tiling generated from the zeros of the tiling-like band-limited map $\Phi$, additionally proving that the resulting tax functions are functions of $\Phi$. We now state the properties of the tax functions and discuss
their interpretation after the proof.

    \begin{prop}[Construction of tax functions]\label{prop:weight}
	There exists a continuous map
	\[
	X\longrightarrow \bigl([0,1]^{\mathbb Z}\bigr)^{\mathbb Z},
	\qquad
	x\longmapsto \bigl(w_m(x,n)\bigr)_{m, n\in\mathbb Z},
	\]
    with the following
	properties.
	
	\begin{enumerate}
		\item For all $\ell,m,n\in\mathbb Z$ and $x\in X$,
		$w_m(T^\ell x,n)=w_m(x,n+\ell)$.
		
		\item If $\Phi(x)=\Phi(y)$, then
		$w_m(x,n)=w_m(y,n)$ for all $m,n\in\mathbb Z$.
		
		\item For all $x\in X$ and $n\in\mathbb Z$, we have $w_m(x,n)=0$ for $|m|>R_0$. Moreover,
		\[
		\#\{m\in\mathbb Z:w_m(x,n)>0\}
		<
		1+\frac{\Leb(\operatorname{Int}_{L_0}W(x,n))}{A}.
		\]
		In particular, if $\operatorname{Int}_{L_0} W(x,n) = \varnothing$, then $w_m(x,n) = 0$ for all $m \in \mathbb{Z}$.

		\item If $x\in X$ and
		$\operatorname{dist}_{\mathbb R}(0,\partial(x))\le L_0$, then
		there exists $n\in[-R_0,R_0]\cap\mathbb Z$
         such that
		$w_{-n}(x,n)=1$.
	\end{enumerate}
	
	Here $\partial(x)=\bigcup_{n\in\mathbb Z}\partial W(x,n)$ and
	$R_0=R_0(A,L_0,L)$ is given by
	Lemma~\ref{lem:prop_W}\textup{(2)}. We write
	$w^{A,L_0,L}_m(x,n)$ when emphasizing the dependence on the
	parameters.
\end{prop}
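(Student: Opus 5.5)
The plan is to follow the tax-system construction of \cite[Lemmas 6.3, 6.4]{Gutman-Tsukamoto2020}, as reused in \cite{Gutman-Qiao-Tsukamoto2019}. I will define the weights through a finite, locally determined allocation scheme built from two kinds of input: the ``demand'' $\alpha_2(\operatorname{dist}_{\mathbb R}(u,\partial(x)))$ at integer points $u$, and the ``capacity'' $\Leb(\operatorname{Int}_{L_0}W(x,n))/A$ of each tile $W(x,n)$.

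Every ingredient is a continuous function of the zero-weights $\nu(x,\cdot)$, and hence of $\Phi(x)$ alone. Indeed, $W(x,n)$ is defined from $\nu(x,\cdot)$ by the Voronoi construction, and $\nu$ depends only on the zero divisor of $\Phi(x)$. This will give (2) immediately, provided the construction of $w$ uses only the tiles $W(x,\cdot)$. Equivariance (1) follows from $W(T^\ell x,m)=-\ell+W(x,m+\ell)$ if the scheme is translation-invariant in $n$. Continuity in $x$ follows from Hausdorff continuity of nondegenerate tiles, together with the observation that degenerate tiles (those with empty $\operatorname{Int}_{L_0}$) carry no capacity. To make $\alpha_2$-type cutoffs continuous I will use continuous truncations such as $\min\{1,\cdot\}$ and $\max\{0,\cdot\}$.

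The concrete construction is an ordered greedy filling. For a point $u\in\mathbb Z$ with $\operatorname{dist}(u,\partial(x))\le L_0+1$, the task is to choose a tile $n$ with $|n-u|\le R_0$ and set $w_{u-n}(x,n)$ positive. This matches (4) with $u=0$ and $m=-n$, i.e.\ the index $m$ records the displacement $u-n$. I will define quantities $w_m(x,n)\in[0,1]$ by a continuous version of the following procedure: process the demands $u$ and tiles $n$ within the window $|n-u|\le R_0$ in the fixed order of $m=u-n$ over $[-R_0,R_0]$. Each tile $n$ accepts at most $\lfloor\Leb(\operatorname{Int}_{L_0}W(x,n))/A\rfloor$ demands, so the count bound in (3) holds. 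Since $\alpha_2\equiv 3$ near the boundary and demands are weighted with values up to $3$, a demand counts as ``satisfied'' once the accumulated weight reaches $1$. The slack in the factor $3$ is what absorbs the loss from the continuous truncations.

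The key inequality guaranteeing that every demand within distance $L_0$ of $\partial(x)$ is fully served is exactly Lemma~\ref{lem:prop_W}(2). It is a Hall-type marriage condition: in any window, total demand (times $A$) is less than total capacity in a smaller window. Combined with the finite range $R_0$, Hall's theorem yields a matching. The support bound $w_m=0$ for $|m|>R_0$ is built in.

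The main obstacle is making the allocation simultaneously continuous and exact, so that $w_{-n}(x,n)=1$ holds, not merely approximately. A discrete Hall matching is not continuous in $x$. I will therefore follow the approach of \cite[Lemma 6.4]{Gutman-Tsukamoto2020}: define a sequential continuous ``water-filling'' over the finitely many offsets $m$. At each stage, a demand takes $\min\{1,\text{remaining capacity}\}$ from tiles, measured by continuous functions. I will then show, using the strict inequality of Lemma~\ref{lem:prop_W}(2) and the surplus factor $3$ in $\alpha_2$, that the procedure assigns full weight $1$ to at least one tile for each demand point with $\operatorname{dist}\le L_0$. I expect verifying this from Lemma~\ref{lem:prop_W}(2) to be the most delicate bookkeeping step.
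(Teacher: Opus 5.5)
\textbf{There is a genuine gap: the tax construction itself is missing.}

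Your routine parts are sound and match the paper. The tiles $W(x,n)$ are read off from $\nu(x,\cdot)$, which depends only on the zero divisor of $\Phi(x)$, so any translation-invariant rule that uses only the tiles gives (1) and (2). Continuity of the capacities $\Leb(\operatorname{Int}_{L_0}W(x,n))/A$ and of $\operatorname{dist}_{\mathbb R}(u,\partial(x))$ comes from Hausdorff continuity of nondegenerate tiles, and $w_m=0$ for $|m|>R_0$ is built in. The paper's proof does only this: it invokes the construction of \cite[Proposition~5.16]{Gutman-Qiao-Tsukamoto2019} with $k=1$ and feeds it Lemma~\ref{lem:prop_W}(2). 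You neither import that construction nor supply a working substitute, and the water-filling you sketch does not deliver (3) and (4).

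\emph{Where (4) fails.} Declaring a point satisfied once its \emph{accumulated} weight reaches $1$ only gives $\sum_n w_{-n}(x,n)\ge 1$. Suppose the first two tiles meeting a boundary point $v$ in your offset order each have remaining capacity $1/2$. Then $v$ collects $1/2+1/2$ and stops, no tile pays it $1$, and (4) fails.

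\emph{Where (3) fails.} With that stopping criterion, continuity forces the amounts drawn by $v$ to taper as $v$ nears saturation. If capacity is debited by the amount paid, a tile with $\Leb(\operatorname{Int}_{L_0}W(x,n))/A=1/2$ can make five payments of $1/10$ to points that each lack only $1/10$. That violates $\#\{m:w_m(x,n)>0\}<1+\Leb(\operatorname{Int}_{L_0}W(x,n))/A$. If instead you do not taper, a point keeps drawing from every tile within $R_0$ after being served. It can then consume up to $2R_0+1$ units, whereas Lemma~\ref{lem:prop_W}(2) only guarantees capacity exceeding the $\alpha_2$-weighted number of nearby points, each weighing at most $3$.

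So the decisive step is missing, not merely delicate. A correct rule has to do three things.
\begin{enumerate}
\item Debit a whole unit of capacity for every positive payment of a tile except possibly its last. This is what produces the strict bound in (3).
\item Prevent fractional payments from nearly exhausted tiles from saturating a point, so that (4) holds with the exact value $1$.
\item Come with a proof that if some $v$ with $\operatorname{dist}_{\mathbb R}(v,\partial(x))\le L_0$ never receives a payment equal to $1$, then $\sum_{|n-v|\le R_0}\Leb(\operatorname{Int}_{L_0}W(x,n))\le A\sum_{|n-v|\le 2R_0}\alpha_2(\operatorname{dist}_{\mathbb R}(n,\partial(x)))$, contradicting Lemma~\ref{lem:prop_W}(2).
\end{enumerate}
Since (4) must hold along continuous families, a connectedness argument shows that whenever service of a point passes from one tile to another, there is a configuration in which both pay $1$. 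The proof in the third item must therefore account for this extra consumption. Your remark that the factor $3$ in $\alpha_2$ absorbs the truncation losses is an assertion, not an argument.

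Either carry out this construction and estimate, or do what the paper does: apply \cite[Proposition~5.16]{Gutman-Qiao-Tsukamoto2019} with $k=1$. Before doing so, check that its only inputs are Lemma~\ref{lem:prop_W}(2) and the fact that the tiling depends on $x$ only through $\Phi(x)$.
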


\begin{proof}
We follow the construction in the proof of
\cite[Proposition~5.16]{Gutman-Qiao-Tsukamoto2019}
with \(k=1\), using Lemma~\ref{lem:prop_W}\textup{(2)}
in place of \cite[Lemma~5.15(2)]{Gutman-Qiao-Tsukamoto2019}.
The resulting functions satisfy \(w_m(x,n)=0\) for \(|m|>R_0\).
This gives the required map with the stated properties.
\end{proof}


We interpret \(w_m(x,n)\) as the tax paid by the tile \(W(x,n)\)
to the orbit position \(n+m\). Property \textup{(1)} expresses
equivariance, while \textup{(2)} says that the tax functions depend
only on \(\Phi(x)\). Property \textup{(3)} says that a tile can help only a limited
number of orbit positions, depending on the length of its
\(L_0\)-interior. In particular, a (short) tile with empty
\(L_0\)-interior pays no tax.
Property \textup{(4)} says that if \(0\) is within distance
\(L_0\) of a tile boundary, then there is an index
\(n_0\in[-R_0,R_0]\cap\mathbb Z\) such that the tax function
of \(W(x,n_0)\) takes the maximal value \(1\) at \(m=-n_0\). By \textup{(3)}, this tile must have an \(L_0\)-interior of
positive length.
See Figure~\ref{fig:tax-functions}.

\begin{figure}[htbp]
    \centering
    \includegraphics[width=\textwidth]{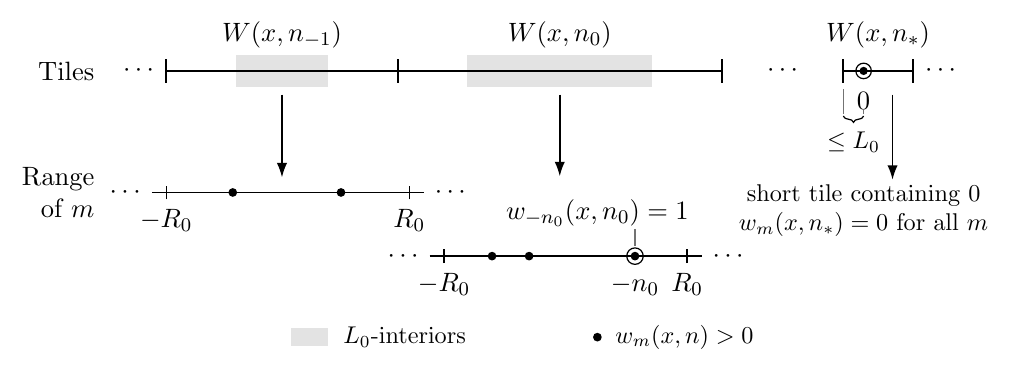}
    \caption{Illustration of the tax functions.}
    \label{fig:tax-functions}
\end{figure}

\section{Proof of the Main Proposition (Proposition \ref{prop:main_prop})}\label{Sec:pf_main_prop}
	
	The organization of the proof follows the one-dimensional form of
	\cite[Section~6]{Gutman-Qiao-Tsukamoto2019}. The two essential modifications
	are the finite-to-one perturbation furnished by	Lemma~\ref{lem:con_hure} and the corresponding fiber-counting argument at the
	end of the proof.

    Throughout this section, we write
\[
K=\left\lfloor\frac{a}{a-m}\right\rfloor
\]
for the uniform Hurewicz bound in
Proposition~\ref{prop:main_prop}.

We first outline the proof.
    We shall construct a $(\delta,K)$-map $g=g_1+g_2$, where $g_1$ is a perturbation of $f$ in the original frequency band and $g_2$ is a small multiple of $\operatorname{Re}\Phi$ in separate frequency bands. The role of $g_2$ is to ensure that points in the same fiber of $g$ have a common Voronoi tiling and the same tax functions.

To compare points in a fiber of \(g\), suppose first
that \(0\) lies well inside a long tile \(W(x,n)\).
For sufficiently large \(N\), mean dimension allows us
to approximate \(N\)-step orbit segments by points of
a finite simplicial complex \(P\), at a scale smaller
than \(\delta\).
We use one point of \(P\) to approximate the orbit on
each time window \([n+j,n+j+N)\), \(j\in N\mathbb Z\),
meeting the tile.
Together, these points describe the orbit over the tile
and provide the coordinates for the finite-dimensional
perturbation. As the tile boundaries move with \(x\), windows enter
and leave the tile.
Simply adding or removing their coordinates in \(P\)
would introduce discontinuities.
We therefore replace \(P\) by its cone \(CP\), giving
each coordinate a weight equal to the fraction of its
window lying inside the tile. The resulting coordinates depend continuously on \(x\),
even when windows enter or leave the tile.

If \(0\) lies near a tile boundary, including when the
tile is short, we use the \emph{tax functions}
in Proposition~\ref{prop:weight}.
They provide a common long tile that pays tax \(1\)
to the orbit position \(0\). 
On this tile, we also record a finite-dimensional
approximation of \(x\) at the same scale.
This allows us to compare the original points even
if the tile does not contain \(0\).

The mean dimension and tax estimates give the dimension
bound required by Lemma~\ref{lem:con_hure}.
We apply this lemma to make a finite-dimensional
perturbation and then use the interpolation result of
Proposition~\ref{prop:prop_psi} to construct \(g_1\).
Together with \(g_2\), this gives the required
\((\delta,K)\)-map \(g=g_1+g_2\).

	\subsection{Choice of parameters.}
    Choose a rational number
	$\rho\in(m,a)$, sufficiently close to $a$, such that
	$\lfloor\rho/(\rho-m)\rfloor=K$. Then
	$m<K\rho/(K+1)$. For this choice of $\rho$, set
	$\Gamma=(1/\rho)\mathbb Z$ and $\Gamma_1=\mathbb Z+\Gamma$, and let
	$r_0,C_\Psi$ and $\Psi$ be as in
	Proposition~\ref{prop:prop_psi}.
	
	Set $\eta=\min\{\delta/4,1/4\}$ and $f_0=(1-\eta)f$. Then
	$\sup_x\|f_0(x)\|_{L^\infty(\mathbb R)}\le 1-\eta$ and
	$\sup_x\|f_0(x)-f(x)\|_{L^\infty(\mathbb R)}\le\eta$. Choose
	$0<\delta'<\min\{1,\eta/(2C_\Psi)\}$, and then choose
	$0<\varepsilon<\delta$ such that
	\begin{equation}\label{eq:f0-uniform-continuity}
d(x,y)<\varepsilon
\quad\Longrightarrow\quad
\|f_0(x)-f_0(y)\|_{L^\infty([0,1])}
<\frac{\delta'}{2}.
\end{equation}
	
	Choose $c_0>1$ so close to one that $c_0m<K\rho/(K+1)$. For fixed
	$\varepsilon$, the sequence
	$\operatorname{Widim}_\varepsilon(X,d_N)/N$ converges to a number not
	exceeding $m$. Hence, by taking a sufficiently large multiple $N$ of the
	denominator of $\rho$, we may find a finite simplicial complex $P$ and an
$\varepsilon$-embedding
$$\Pi:(X,d_N)\to P$$
such that
$\rho N\in\mathbb N$ and
$c_0(\dim P+1)/N<K\rho/(K+1)$.
Indeed, set $\gamma:=K\rho/(c_0(K+1))-m>0$.
For sufficiently large $N$, we may choose $P$ with
$\dim P/N<m+\gamma/2$ and also ensure $1/N<\gamma/2$, whence
$
c_0\frac{\dim P+1}{N}
<c_0(m+\gamma)=\frac{K\rho}{K+1}.
$ Also choose a finite simplicial complex
	$Q$ and an $\varepsilon$-embedding 
    $$\pi:(X,d)\to Q.$$
	
	Let $CP=([0,1]\times P)/(\{0\}\times P)$ be the cone over $P$, and define
	$CQ$ similarly. We write $tp\in CP$ and $tq\in CQ$ for the cone classes and
	denote either cone vertex by $*$. Since $\dim(CP)=\dim P+1$, the numbers
	\[
	c_1:=\frac{K\rho}{K+1}-c_0\frac{\dim(CP)}{N}
	\quad\text{and}\quad
	r_1:=r_0+\frac1\rho
	\]
	satisfy $c_1>0$. Choose $A>2\dim(CQ)/c_1$, and then choose $L_0$ so large that
$$
2L_0-2(r_1+N)>
\max\left\{
\frac{2\dim(CQ)}{c_1},
\frac{2r_1+4N}{c_0-1}
\right\}.
$$
This is possible because $c_1>0$ and $c_0>1$, and it ensures
$L_0>r_1+N$.

If $W$ is a closed interval of length $\ell$ with
$\operatorname{Int}_{L_0}W\ne\varnothing$, then $\ell\ge2L_0$.
Writing $I=\Leb(\operatorname{Int}_{r_1+N}W)$, we obtain
$$
I=\ell-2(r_1+N)\ge2L_0-2(r_1+N)>
\max\left\{
\frac{2\dim(CQ)}{c_1},
\frac{2r_1+4N}{c_0-1}
\right\}.
$$
Thus $2\dim(CQ)<c_1I$, while
$$
\Leb(W\cup\partial_NW)
=\ell+2N=I+2r_1+4N<I+(c_0-1)I=c_0I.
$$
Consequently,
\begin{equation}\label{eq:1559}
    2\dim(CQ)<c_1\Leb(\operatorname{Int}_{r_1+N}W),
    \qquad
    \Leb(W\cup\partial_NW)
    <c_0\Leb(\operatorname{Int}_{r_1+N}W).
\end{equation}

Finally, choose an integer $
L>\max\{4/\delta,1000(A+1)(L_0+2)\}.
$
With these choices of $A,L_0,L$, let $\Phi(x)$, $W(x,n)$,
and $w_m(x,n)$ denote, respectively, the tiling-like band-limited map,
the second Voronoi tiles, and the tax functions
constructed in Section~\ref{Sec:soc_wel}.

For \(x\in X\) and \(n\in\mathbb Z\), set
\begin{equation}\label{eq:def_kappa}
\kappa(x,n):=\#\{m\in\mathbb Z:w_m(x,n)>0\}.
\end{equation}
The following lemma gives the dimension estimate needed
for the finite-dimensional perturbation.
\begin{lem}\label{lem:dim_cont}
		Let \(x\in X\) and \(n\in\mathbb Z\). If $\operatorname{Int}_{L_0}W(x,n)\ne\varnothing$, then
		\begin{equation}\label{eq:1600}
			\frac{\Leb(W(x,n)\cup\partial_NW(x,n))}{N}\dim(CP)
			+\kappa(x,n)\dim(CQ)
			<\frac{K\rho}{K+1}\Leb(\operatorname{Int}_{r_1+N}W(x,n)).
		\end{equation}
	\end{lem}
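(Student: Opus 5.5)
Write $W=W(x,n)$ and $I=\Leb(\operatorname{Int}_{r_1+N}W)$. The plan is to bound the two terms on the left of \eqref{eq:1600} separately, using the two inequalities in \eqref{eq:1559} together with the tax bound from Proposition~\ref{prop:weight}\textup{(3)}. Since $\operatorname{Int}_{L_0}W\neq\varnothing$, $W$ is a closed interval of length $\ell\ge 2L_0$, so \eqref{eq:1559} applies to it.

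\emph{First term.} By the second inequality of \eqref{eq:1559},
\[
\frac{\Leb(W\cup\partial_NW)}{N}\dim(CP)<c_0\frac{\dim(CP)}{N}\,I .
\]
By the definition of $c_1$, the right-hand side equals $\bigl(\tfrac{K\rho}{K+1}-c_1\bigr)I$.

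\emph{Second term.} Proposition~\ref{prop:weight}\textup{(3)} gives $\kappa(x,n)<1+\Leb(\operatorname{Int}_{L_0}W)/A$. For an interval, $\Leb(\operatorname{Int}_{L_0}W)=\ell-2L_0$, and $L_0>r_1+N$ gives $\ell-2L_0\le \ell-2(r_1+N)=I$. Hence $\kappa(x,n)<1+I/A$, and therefore
\[
\kappa(x,n)\dim(CQ)<\dim(CQ)+\frac{\dim(CQ)}{A}\,I .
\]
If $\dim(CQ)=0$ this term vanishes, so we may assume $\dim(CQ)>0$. The first inequality of \eqref{eq:1559} gives $\dim(CQ)<c_1I/2$. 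The choice $A>2\dim(CQ)/c_1$ gives $\dim(CQ)/A<c_1/2$. Together,
\[
\kappa(x,n)\dim(CQ)<\frac{c_1}{2}I+\frac{c_1}{2}I=c_1I .
\]

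\emph{Conclusion.} Adding the two estimates gives a left-hand side strictly less than $\bigl(\tfrac{K\rho}{K+1}-c_1\bigr)I+c_1I=\tfrac{K\rho}{K+1}I$, which is \eqref{eq:1600}. Strictness is safe because the first-term bound is strict and $I>0$.

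The only point needing care is the comparison $\Leb(\operatorname{Int}_{L_0}W)\le I$. This relies on the elementary computation $\Leb(\operatorname{Int}_rW)=\max\{\ell-2r,0\}$ for a compact interval of length $\ell$ (under the definition of $\partial_r$), together with the monotonicity of $\operatorname{Int}_r$ in $r$. I would check this computation explicitly. The rest is direct bookkeeping with the chosen constants.
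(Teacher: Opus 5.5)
Your proposal is correct and follows essentially the same route as the paper's proof: you bound the first term by $c_0\frac{\dim(CP)}{N}I$ via \eqref{eq:1559}, bound $\kappa(x,n)\dim(CQ)$ by $c_1I$ using Proposition~\ref{prop:weight}\textup{(3)}, the comparison $\Leb(\operatorname{Int}_{L_0}W)\le I$ and $A>2\dim(CQ)/c_1$, then add and invoke the definition of $c_1$. The differences are cosmetic: you get $\Leb(\operatorname{Int}_{L_0}W)\le I$ from the explicit length formula $\ell-2r$ rather than from the inclusion $\operatorname{Int}_{L_0}W\subset\operatorname{Int}_{r_1+N}W$, and your separate case $\dim(CQ)=0$ is a harmless precaution that the paper avoids by using a non-strict intermediate inequality, with strictness supplied by the $\dim(CP)$ term.
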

	
	\begin{proof}
		Write $W=W(x,n)$ and $I_s=\Leb(\operatorname{Int}_sW)$.
By \eqref{eq:1559}, we have
$\Leb(W\cup\partial_NW)<c_0I_{r_1+N}$. By
Proposition~\ref{prop:weight}\textup{(3)} it holds $\kappa(x,n)<1+I_{L_0}/A.$
Multiplying these inequalities by $\dim(CP)/N$ and $\dim(CQ)$,
respectively, and adding, we obtain
\[
\frac{\Leb(W\cup\partial_NW)}{N}\dim(CP)
+\kappa(x,n)\dim(CQ)
<
c_0\frac{\dim(CP)}{N}I_{r_1+N}
+\left(1+\frac{I_{L_0}}{A}\right)\dim(CQ).
\]

      By \eqref{eq:1559} and the choice $c_1A>2\dim(CQ)$, we have
\[
\dim(CQ)<\frac{c_1}{2}I_{r_1+N},
\qquad
\frac{\dim(CQ)}{A}<\frac{c_1}{2}.
\]
Since $L_0>r_1+N$,
$\operatorname{Int}_{L_0}W\subset
\operatorname{Int}_{r_1+N}W$, so $I_{L_0}\le I_{r_1+N}$.
Conclude
\[
\left(1+\frac{I_{L_0}}{A}\right)\dim(CQ)
\le
\dim(CQ)+\frac{\dim(CQ)}{A}I_{r_1+N}
<
\frac{c_1}{2}I_{r_1+N}
+\frac{c_1}{2}I_{r_1+N}
=
c_1I_{r_1+N}.
\]
Therefore by the definition of $c_1$ (LHS) of \eqref{eq:1600} is smaller than
\[
\left(c_0\frac{\dim(CP)}{N}+c_1\right)I_{r_1+N}
=
\frac{K\rho}{K+1}I_{r_1+N}.
\]
\end{proof}
	
	\subsection{Finite-dimensional perturbation.} \label{subsec:finite_dim_perturbation}
	By Lemma~\ref{lem:prop_W}\textup{(1)},
	\[
	D_W:=\sup\{|u-n|:x\in X,\ n\in\mathbb Z,\ u\in W(x,n)\}<\infty.
	\]
		Recall the constant $R_0(A,L_0,L)$ from Lemma~\ref{lem:prop_W}\textup{(2)}. Choose a positive multiple $R$ of $N$ such that
	$R>\max\{R_0(A,L_0,L),D_W\}$. Then
	$-n+W(x,n)\subset[-R,R)$ for every $x\in X$ and $n\in\mathbb Z$. Denote
   \[
	J_N=[-R,R)\cap N\mathbb Z,\qquad J=[-
    R,R)\cap\mathbb Z.
	\]
 and define
	\[
	\mathcal P=(CP)^{J_N},\qquad \mathcal Q=(CQ)^J.
	\]
		For $s\in\mathbb Z_{\ge0}$, let $\mathcal Q(s)$ consist of the points
	$q=(q_m)_{m\in J}$ having at most $s$ coordinates different from $*$. For an
	interval $W\subset[-R,R)$, define
	\[
	\mathcal P(W):=
	\{p=(p_j)_{j\in J_N}\in\mathcal P:p_j=*
	\text{ if }W\cap[j,j+N)=\varnothing\}.
	\]
    
We equip \(CP\) and \(CQ\) with their natural simplicial
structures and \(\mathcal P\) and \(\mathcal Q\) with the
induced product polytopal structures.
Then \(\mathcal P(W)\times\mathcal Q(s)\) is a polytopal
subcomplex of \(\mathcal P\times\mathcal Q\).\footnote{The set \(\mathcal P(W)\) is a product whose
factors are \(CP\) or its vertex subcomplex \(\{*\}\).
It is therefore a subcomplex of \(\mathcal P\).
Similarly, \(\mathcal Q(s)\) is a finite union of products
whose factors are \(CQ\) or \(\{*\}\), and is therefore
a subcomplex of \(\mathcal Q\). Consequently, \(\mathcal P(W)\times\mathcal Q(s)\) is a
polytopal subcomplex of \(\mathcal P\times\mathcal Q\).}

The dimensions of $\mathcal Q(s)$ and $\mathcal P(W)$ are estimated by
	\[
	\dim\mathcal Q(s)\le s\dim(CQ) \qquad \text{and} \qquad
	\dim\mathcal P(W)
	\le\frac{\Leb(W\cup\partial_NW)}{N}\dim(CP).
	\]
	Combining these dimension estimates with Lemma~\ref{lem:dim_cont}, we obtain that if  $\operatorname{Int}_{L_0}W(x,n)\ne\varnothing$,
	\begin{equation}\label{eq:cont_dim_P_Q}
		\dim\bigl(\mathcal P(-n+W(x,n))\times
		\mathcal Q(\kappa(x,n))\bigr)
		<\frac{K\rho}{K+1}\Leb\bigl(\operatorname{Int}_{r_1+N}W(x,n)\bigr).
	\end{equation}
	
	For such an interval $W$, define
	\[
	\Pi_W(x):=\left(
	\frac{\Leb\bigl(W\cap[j,j+N)\bigr)}{N}\,\Pi(T^jx)
	\right)_{j\in J_N}\in\mathcal P(W).
	\]
	Then the map $\Pi_W$ is an $\varepsilon$-embedding for
	$d_{\operatorname{Int}W\cap\mathbb Z}$.	Indeed, suppose that
$\Pi_W(x)=\Pi_W(y)$. If $\Leb(W\cap[j,j+N))>0$ for  $j\in J_N$, then $\Pi(T^jx)=\Pi(T^jy)$. Conclude
$d_N(T^jx,T^jy)<\varepsilon$. As every
$t\in\operatorname{Int}W\cap\mathbb Z$ belongs to a unique block
$[j,j+N)$, $j\in J_N$, whose intersection with $W$ has positive
Lebesgue measure and $t-j\in\{0,\ldots,N-1\}$, it holds $
d(T^tx,T^ty)\le d_N(T^jx,T^jy)<\varepsilon
$.

Suppose
	$d_N(x,y)<\varepsilon$ and let
	$\lambda\in\Gamma\cap[0,N)$. Write $\lambda=k+t$, where
	$k\in\{0,\ldots,N-1\}$ and $t\in[0,1)$. Then
	$d(T^kx,T^ky)<\varepsilon$, \eqref{eq:f0-uniform-continuity} and equivariance give
	\[
	|f_0(x)(\lambda)-f_0(y)(\lambda)|
	=
	|f_0(T^kx)(t)-f_0(T^ky)(t)|
	<\frac{\delta'}{2}.
	\]
	Therefore, applying Lemma~\ref{lem:approx_S} to $\Pi:(X,d_N)\to P$ and the map
	$x\mapsto(f_0(x)(\lambda))_{\lambda\in\Gamma\cap[0,N)}$, we obtain a continuous map
	$F:P\to\mathbb R^{\Gamma\cap[0,N)}$ satisfying
	\[
	|F(\Pi(x))(\lambda)-f_0(x)(\lambda)|<\frac{\delta'}{2}
	\quad(x\in X,\ \lambda\in\Gamma\cap[0,N)).
	\]
	
	Extend $F$ to $CP$ by $F(tp)=tF(p)$. Since $\rho N\in\mathbb N$ and $R$ is
	a multiple of $N$, every element of $\Gamma\cap[-R,R)$ has a unique
	representation $j+\lambda$ with $j\in J_N$ and
	$\lambda\in\Gamma\cap[0,N)$. Define
	$G:\mathcal P\times\mathcal Q\to\mathbb R^{\Gamma\cap[-R,R)}$ by
	$G(p,q)(j+\lambda)=F(p_j)(\lambda)$.
	
	Applying Lemma~\ref{lem:con_hure}\textup{(2)} to 
	$\mathcal P\times\mathcal Q$, with coordinates indexed by
	$\Gamma\cap[-R,R)$, we obtain a continuous map
	$\mathbb G:\mathcal P\times\mathcal Q\to
	\mathbb R^{\Gamma\cap[-R,R)}$ satisfying
	$\|\mathbb G-G\|_\infty<\delta'/2$ and the following property.
	\begin{prop}[Properties of $\mathbb G$]\label{prop:prop_mb_G}
		\indent
		\begin{enumerate}
			\item For every interval $W\subset[-R,R)$, $x\in X$, $q\in\mathcal Q$, and
			$\lambda\in\Gamma\cap\operatorname{Int}_NW$,
			\[|\mathbb G(\Pi_W(x),q)(\lambda)-f_0(x)(\lambda)|<\delta'.\]
						
			\item Let $W\subset[-R,R)$ be an interval, $s\in\mathbb Z_{\ge0}$, and
			$\Lambda\subset\Gamma\cap[-R,R)$. If
			$\#\Lambda>\frac{K+1}{K}\dim(\mathcal P(W)\times\mathcal Q(s))$, then the map
			$(p,q)\mapsto\mathbb G(p,q)|_\Lambda$ on
			$\mathcal P(W)\times\mathcal Q(s)$ has fibers of cardinality at most $K$.
		\end{enumerate}
	\end{prop}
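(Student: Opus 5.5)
Property (2) is a direct application of Lemma~\ref{lem:con_hure}\textup{(2)}, and property (1) follows from $\|\mathbb G-G\|_\infty<\delta'/2$ together with the approximation $|F(\Pi(x))(\lambda)-f_0(x)(\lambda)|<\delta'/2$. The plan is to treat them in that order.

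For (2), apply Lemma~\ref{lem:con_hure}\textup{(2)} to the polytopal complex $\mathcal P\times\mathcal Q$ with $D=\#(\Gamma\cap[-R,R))$, the coordinates being indexed by $\Gamma\cap[-R,R)$. The lemma says that the maps whose restrictions $f_A|_{Q'}$ have fibers of cardinality at most $K$ form a dense $G_\delta$ subset. Here $Q'$ ranges over nonempty subcomplexes, $A$ over coordinate sets with $\#A>\frac{K+1}{K}\dim Q'$, and the subset lives in $C(\mathcal P\times\mathcal Q,\mathbb R^{\Gamma\cap[-R,R)})$. By density, choose $\mathbb G$ in this set with $\|\mathbb G-G\|_\infty<\delta'/2$.

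For an interval $W\subset[-R,R)$ and $s\ge0$, the set $\mathcal P(W)\times\mathcal Q(s)$ is a nonempty polytopal subcomplex, as noted in the footnote; it contains $(*,\dots,*)$. Taking $Q'=\mathcal P(W)\times\mathcal Q(s)$ and $A=\Lambda$ gives (2) verbatim. There are only finitely many distinct such subcomplexes, since $\mathcal P(W)$ depends only on which blocks $[j,j+N)$ meet $W$ and $s$ only matters up to $\#J$. In any case the lemma already quantifies over all subcomplexes, so no extra care is needed.

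For (1), fix $W$, $x$, $q$ and $\lambda\in\Gamma\cap\operatorname{Int}_NW$. Write $\lambda=j+\mu$ with $j\in J_N$ and $\mu\in\Gamma\cap[0,N)$. The key observation is that $\lambda\in\operatorname{Int}_NW$ means $[\lambda-N,\lambda+N]\subset W$. Since $[j,j+N)\subset[\lambda-N,\lambda+N]$, the whole block lies in $W$, so the weight $\Leb(W\cap[j,j+N))/N$ equals $1$. Hence the $j$-coordinate of $\Pi_W(x)$ is $\Pi(T^jx)$, and
\[
G(\Pi_W(x),q)(\lambda)=F(\Pi(T^jx))(\mu).
\]
By the choice of $F$ this is within $\delta'/2$ of $f_0(T^jx)(\mu)=f_0(x)(j+\mu)=f_0(x)(\lambda)$, using equivariance of $f_0$, which it inherits from $f$. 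The triangle inequality with $\|\mathbb G-G\|_\infty<\delta'/2$ then yields the bound $\delta'$.

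The only point needing care is this containment of the block in $W$. Without it the cone weight could be less than $1$, and $F(tp)=tF(p)$ would then fail to approximate $f_0$. The containment is exactly what the $N$-interior hypothesis provides, so I expect no real obstacle.
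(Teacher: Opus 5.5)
Your proposal is correct and matches the paper's proof essentially step for step. For (2), you choose $\mathbb G$ from the dense $G_\delta$ set of Lemma~\ref{lem:con_hure}\textup{(2)}, taking $Q=\mathcal P(W)\times\mathcal Q(s)$ and $A=\Lambda$. For (1), the hypothesis $\lambda\in\operatorname{Int}_NW$ forces the block $[j,j+N)$ into $W$, so the cone weight is $1$, and the triangle inequality with equivariance of $f_0$ finishes. Your explicit check that $[j,j+N)\subset[\lambda-N,\lambda+N]$ fills in the one step the paper states without detail.
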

	
	\begin{proof}
    Part \textup{(2)} follows from the choice of \(\mathbb G\)
in Lemma~\ref{lem:con_hure}\textup{(2)}, with
\(Q=\mathcal P(W)\times\mathcal Q(s)\) and \(A=\Lambda\).
		For \textup{(1)}, write $\lambda=j+\lambda_0$ with $j\in J_N$ and
		$\lambda_0\in\Gamma\cap[0,N)$. Since
		$\lambda\in\operatorname{Int}_NW$, the block $[j,j+N)$ is contained in
		$W$, so the $j$-th cone coefficient of $\Pi_W(x)$ is one. Hence
		$G(\Pi_W(x),q)(\lambda)
		=F(\Pi(T^jx))(\lambda_0).$
		Using $f_0(T^jx)(\lambda_0)=f_0(x)(\lambda)$, we obtain
			\[
				|\mathbb G(\Pi_W(x),q)(\lambda)-f_0(x)(\lambda)|
				\le
				\|\mathbb G-G\|_\infty
				+|F(\Pi(T^jx))(\lambda_0)-f_0(T^jx)(\lambda_0)|
				<\delta'.
			\]
			
		This proves \textup{(1)}.
	\end{proof}
	
	Now we are ready to prove the main proposition.
	
	\begin{proof}[Proof of Proposition~\ref{prop:main_prop}]
		Choose continuous functions $\beta_1,\beta_2:\mathbb R\to[0,1]$ such that
		$\beta_1=0$ on $(-\infty,r_0]$ and $\beta_1=1$ on
		$[r_0+1,\infty)$, while $\beta_2=0$ on $(-\infty,N]$ and $\beta_2=1$ on
		$[r_0+N,\infty)$. For $x\in X$ and $n\in\mathbb Z$, denote
		\[
		q_{x,n}:=
		\bigl(w_m(x,n)\cdot\pi(T^{n+m}x)\bigr)_{m\in J}\in\mathcal Q.
		\]
        The vector \(q_{x,n}\) records the auxiliary orbit information
assigned to the tile \(W(x,n)\), with the taxes as weights.
Scalar multiplication is taken in the cone \(CQ\):
a zero tax gives the cone vertex, while a tax of \(1\)
preserves the full coordinate \(\pi(T^{n+m}x)\). Recall that $J=[-R,R)\cap\mathbb Z$.
Since \(w_m(x,n)=0\) for \(|m|>R_0\) and \(R>R_0\),
all indices with positive tax belong to \(J\).
By the definition of \(\kappa\) in \eqref{eq:def_kappa},
at most \(\kappa(x,n)\) coordinates differ from the cone vertex,
so \(q_{x,n}\in\mathcal Q(\kappa(x,n))\).
		
		
		We next define functions $p_x:\Gamma_1\to[0,1]$ and
		$u_x:\Gamma_1\to\mathbb R$. If
		$\lambda\in(n+\Gamma)\cap W(x,n)$ for some $n\in\mathbb Z$, set	
		\[
		\begin{split}
			p_x(\lambda)
			&=\beta_1\bigl(\operatorname{dist}_{\mathbb R}(\lambda,\partial(x))\bigr),\\
			u_x(\lambda)
			&=\beta_2\bigl(\operatorname{dist}_{\mathbb R}(\lambda,\partial(x))\bigr)
			\left\{-f_0(x)(\lambda)+
			\mathbb G\bigl(\Pi_{-n+W(x,n)}(T^nx),q_{x,n}\bigr)(\lambda-n)
			\right\}.
		\end{split}
		\]
		If no such $n$ exists, set $p_x(\lambda)=u_x(\lambda)=0$. If more than one
		$n$ is possible, then $\lambda\in\partial(x)$ and both cutoffs vanish, so
		the definition is independent of the choice of $n$. Moreover, if two points
		in $\operatorname{supp}p_x$ belong to the same tile, their difference lies
		in $\Gamma$; if they belong to different tiles, a tile boundary lies between
		them and their distance is greater than $2r_0$. Thus
		$p_x\in\mathrm{Adf}_{r_0}$.

       We now claim that \(\|u_x\|_{\ell^\infty(\Gamma_1)}\le\delta'\)
for every \(x\in X\).
If \(u_x(\lambda)\ne0\), then
\(\lambda\in(n+\Gamma)\cap W(x,n)\) for some \(n\in\mathbb Z\).
Since \(\beta_2\) vanishes on \((-\infty,N]\), we have
\(\lambda-n\in\Gamma\cap\operatorname{Int}_N(-n+W(x,n))\).
Using \(f_0(T^nx)(\lambda-n)=f_0(x)(\lambda)\)
and \(0\le\beta_2\le1\), we obtain from
Proposition~\ref{prop:prop_mb_G}\textup{(1)} that
\[
|u_x(\lambda)|
\le
\left|
\mathbb G\bigl(\Pi_{-n+W(x,n)}(T^nx),q_{x,n}\bigr)(\lambda-n)
-f_0(T^nx)(\lambda-n)
\right|<\delta'.
\]
This also holds when \(u_x(\lambda)=0\).
Taking the supremum over \(\lambda\in\Gamma_1\) proves the claim.
		
		Set $g_1(x):=f_0(x)+\Psi(p_x,u_x)$. Using the same argument in
		\cite[pp.~1497--1498]{Gutman-Qiao-Tsukamoto2019}, we can show that
		$g_1:X\to V^{\mathbb R}[-a/2,a/2]$ is continuous and equivariant. Moreover,
		Proposition~\ref{prop:prop_psi}\textup{(2)} gives
		$\|g_1(x)-f_0(x)\|_{L^\infty(\mathbb R)}
		\leq C_\Psi\delta'<\eta/2$.
		Define $g_2(x):=\eta\operatorname{Re}\Phi(x)/(2C_{\chi})$ and
		$g:=g_1+g_2$. By Lemma~\ref{Lem:prop_Phi}, the map $g_2$ is continuous
		and equivariant, and
		\[
		\|g_2(x)\|_{L^\infty(\mathbb R)}\le\frac{\eta}{2},
		\qquad
		\operatorname{supp}\widehat{g_2(x)}
		\subset
		\left(-\frac{a+\delta}{2},-\frac a2\right)
		\cup
		\left(\frac a2,\frac{a+\delta}{2}\right).
		\]
		Since $g_1(x)\in V^{\mathbb R}[-a/2,a/2]$, it follows that
		$g(x)\in
		V^{\mathbb R}[-(a+\delta)/2,(a+\delta)/2].$
		Moreover,
		\[
		\|g(x)\|_{L^\infty(\mathbb R)}
		\le 1-\eta+C_\Psi\delta'+\frac{\eta}{2}<1,
		\qquad 
		\|g(x)-f(x)\|_{L^\infty(\mathbb R)}
		<2\eta<\delta.
		\]
		Therefore $g:X\to B_1(V^{\mathbb R}[-(a+\delta)/2,(a+\delta)/2])$ is continuous and equivariant, and it is $\delta$-close to $f$.
		
		It remains to verify that $g$ is a $(\delta,K)$-map with respect to $d$. Fix a fiber
		$g^{-1}(y)$ and let $\mathscr E\subset g^{-1}(y)$ be a
		$\delta$-separated set. There is nothing to prove if
		$\mathscr E=\varnothing$. So in the following, we assume that 
		$\mathscr E\neq\varnothing$.

        We first show that \(g_1(x_1)=g_1(x_2)\) and
\(\Phi(x_1)=\Phi(x_2)\) for all \(x_1,x_2\in\mathscr E\).
In particular, all points of \(\mathscr E\) determine
the same Voronoi tiling and tax functions.
Let \(x_1,x_2\in\mathscr E\).
Since \(g(x_1)=g(x_2)\), we have
\(g_1(x_1)-g_1(x_2)=-(g_2(x_1)-g_2(x_2))\).
The Fourier support of \(g_1(x_1)-g_1(x_2)\)
is contained in \([-a/2,a/2]\), while that of
\(g_2(x_1)-g_2(x_2)\) is contained in
\[
\left(-\frac{a+\delta}{2},-\frac a2\right)
\cup
\left(\frac a2,\frac{a+\delta}{2}\right).
\]
Their intersection is empty, so
\(g_1(x_1)=g_1(x_2)\) and \(g_2(x_1)=g_2(x_2)\).

Set \(h=(\Phi(x_1)-\Phi(x_2))|_{\mathbb R}\).
The equality \(g_2(x_1)=g_2(x_2)\) gives
\(h=-\overline h\).
By Lemma~\ref{Lem:prop_Phi}\textup{(2)},
the Fourier support of \(h\) is contained in
\((a/2,(a+\delta)/2)\), while that of \(\overline h\)
is contained in \((-(a+\delta)/2,-a/2)\),
since complex conjugation reflects Fourier support.
Their intersection is empty, so \(h=0\).
The identity theorem then gives
\(\Phi(x_1)=\Phi(x_2)\).
By the construction in
Subsection~\ref{subsection:tiling_band_maps} and
Proposition~\ref{prop:weight}\textup{(2)},
the tiling and tax functions depend only on \(\Phi(x)\).
Hence \(x_1\) and \(x_2\) have the same tiling
and tax functions.

Fix \(x_*\in\mathscr E\). We write
\(W_n:=W(x_*,n)\) for the tiles of this common tiling,
\(w_m(n):=w_m(x_*,n)\) for the common taxes, and
\(\kappa_n:=\kappa(x_*,n)\) for the number of tax-paying
locations for each tile \(W_n\). Recall that \(\Gamma=(1/\rho)\mathbb Z\).
	For every $n$ with
		$\operatorname{Int}_{L_0}W_n\ne\varnothing$, set $\Lambda_n:=\Gamma\cap
		\bigl(-n+\operatorname{Int}_{r_0+N}W_n\bigr),$ and set
		\[
		\Xi_n( x):=\left(
		\Pi_{-n+W_n}(T^n x),
		\bigl(w_m(n)\cdot\pi(T^{n+m} x)\bigr)_{m\in J}
		\right),
		\qquad  x\in\mathscr E.
		\]
		Then
		$\Xi_n( x)\in
		\mathcal P(-n+W_n)\times\mathcal Q(\kappa_n).$

    Now we show that
\(\mathbb G(\Xi_n(x))|_{\Lambda_n}\)
is independent of \(x\in\mathscr E\).
    Let \(x\in\mathscr E\) and \(\lambda\in\Lambda_n\).
Since \(W(x,n)=W_n\), we have 
$n+\lambda\in(n+\Gamma)\cap
\operatorname{Int}_{r_0+N}W(x,n)\subset\Gamma_1,$
and hence
$\operatorname{dist}_{\mathbb R}(n+\lambda,\partial(x))
\ge r_0+N.$
Since \(N\ge 1\), the definitions of \(p_x\), \(u_x\),
and \(\Xi_n\) give
\[
\begin{aligned}
p_x(n+\lambda)&=1,\\
u_x(n+\lambda)
&=-f_0(x)(n+\lambda)+\mathbb G(\Xi_n(x))(\lambda).
\end{aligned}
\]
The interpolation property in
Proposition~\ref{prop:prop_psi}\textup{(1)} therefore gives
\[
\begin{aligned}
g_1(x)(n+\lambda)
&=f_0(x)(n+\lambda)+\Psi(p_x,u_x)(n+\lambda)\\
&=f_0(x)(n+\lambda)+u_x(n+\lambda)=\mathbb G(\Xi_n(x))(\lambda).
\end{aligned}
\]
Since \(g_1(x)=g_1(x_*)\) for every \(x\in\mathscr E\),
it follows that
\[
\left.\mathbb G(\Xi_n(x))\right|_{\Lambda_n}
=
\left.\mathbb G(\Xi_n(x_*))\right|_{\Lambda_n}
\qquad\text{for all }x\in\mathscr E.
\]

            We next verify the dimension condition in
Proposition~\ref{prop:prop_mb_G}\textup{(2)} for
\(\mathcal P(-n+W_n)\times\mathcal Q(\kappa_n)\)
and the coordinate set \(\Lambda_n\).
For every bounded interval \(I\), we have
\(\#(\Gamma\cap I)\ge\rho\Leb(I)-1\).
Since \(r_1=r_0+1/\rho\), the interval
\(\operatorname{Int}_{r_0+N}W_n\) is longer than
\(\operatorname{Int}_{r_1+N}W_n\) by \(2/\rho\).
Hence
\[
\#\Lambda_n
\ge \rho\Leb(\operatorname{Int}_{r_0+N}W_n)-1
\ge\rho\Leb(\operatorname{Int}_{r_1+N}W_n).
\]
Together with \eqref{eq:cont_dim_P_Q}, this gives
\[
\#\Lambda_n >
\frac{K+1}{K}\dim\bigl(
\mathcal P(-n+W_n)\times\mathcal Q(\kappa_n)
\bigr).
\]
Proposition~\ref{prop:prop_mb_G}\textup{(2)} therefore gives
	\begin{equation}\label{eq:Xi_image_bound}
	\#\Xi_n(\mathscr E)\le K\qquad \text{for each $n$ with} \operatorname{Int}_{L_0}W_n\ne\varnothing.
	\end{equation}
 
Consequently, to prove that $\#\mathscr E\le K$, it is enough to find \(n_0\in\mathbb Z\)
such that \(\operatorname{Int}_{L_0}W_{n_0}\ne\varnothing\)
and \(\Xi_{n_0}\) is injective on \(\mathscr E\).
We distinguish two cases.
First suppose
\(\operatorname{dist}_{\mathbb R}(0,\partial(x_*))>L_0\).
Then \(0\in\operatorname{Int}_{L_0}W_{n_0}\)
for some \(n_0\in\mathbb Z\).
We shall show that \(\Xi_{n_0}\) is injective
on \(\mathscr E\).
Let \(x,x'\in\mathscr E\) satisfy
\(\Xi_{n_0}(x)=\Xi_{n_0}(x')\).
Equality of the first components and the
\(\varepsilon\)-embedding property of
\(\Pi_{-n_0+W_{n_0}}\) imply
\[
d_{(-n_0+\operatorname{Int}W_{n_0})\cap\mathbb Z}
(T^{n_0}x,T^{n_0}x')<\varepsilon.
\]
Since
\(-n_0\in(-n_0+\operatorname{Int}W_{n_0})\cap\mathbb Z\),
it follows that \(d(x,x')<\varepsilon<\delta\).
Hence \(x=x'\) because \(\mathscr E\) is
\(\delta\)-separated. Thus \(\Xi_{n_0}\) is injective
on \(\mathscr E\).

Now suppose
\(\operatorname{dist}_{\mathbb R}(0,\partial(x_*))\le L_0\).
Proposition~\ref{prop:weight}\textup{(4)} gives
\(n_0\in[-R_0,R_0]\cap\mathbb Z\) such that
\(w_{-n_0}(n_0)=1\).
By Proposition~\ref{prop:weight}\textup{(3)},
we have \(\operatorname{Int}_{L_0}W_{n_0}\ne\varnothing\).
Since \(|n_0|\le R_0<R\), the coordinate \(-n_0\)
belongs to \(J=[-R,R)\cap\mathbb Z\).
Let \(x,x'\in\mathscr E\) satisfy
\(\Xi_{n_0}(x)=\Xi_{n_0}(x')\).
Comparing the coordinates indexed by \(-n_0\)
in the second components, we obtain
\[
w_{-n_0}(n_0)\cdot\pi(T^{n_0-n_0}x)
=
w_{-n_0}(n_0)\cdot\pi(T^{n_0-n_0}x').
\]
Since \(w_{-n_0}(n_0)=1\), this gives
\(\pi(x)=\pi(x')\).
The map \(\pi\) is an \(\varepsilon\)-embedding for \(d\),
so \(d(x,x')<\varepsilon<\delta\).
Again, \(x=x'\) because \(\mathscr E\) is
\(\delta\)-separated. Thus \(\Xi_{n_0}\) is injective
on \(\mathscr E\).

In either case, we have found an index \(n_0\) such that
\(\Xi_{n_0}\) is injective on \(\mathscr E\).
Hence  \eqref{eq:Xi_image_bound} gives 
$\#\mathscr E= \#\Xi_{n_0}(\mathscr E)\le K.$ 
Therefore every \(\delta\)-separated subset of every
fiber of \(g\) has at most \(K\) points, so \(g\) is
a \((\delta,K)\)-map. This ends the proof.
	\end{proof}

\subsection*{Acknowledgements}
Y.G. was partially supported by the National Science Centre (Poland)
grant 2020/39/B/ST1/02329.

	\bibliographystyle{amsalpha} 

	\bibliography{main.bib}

\end{document}